\newtheorem{theorem}{Theorem}[section]
\newtheorem{corollary}[theorem]{Corollary}
\newtheorem{problem}[theorem]{Problem}
\newtheorem{lemma}[theorem]{Lemma}
\newenvironment{proof}{\noindent {\bf Proof.}}{\rule{3mm}{3mm}\par\medskip}
\newcommand{\ZA}{\mbox{$\mathcal Z$}}
\newcommand{\Z}{\mbox{$\mathbb Z$}}
\begin{document}

\title{Group Connectivity under $3$-Edge-Connectivity}

 \author{
  Miaomiao Han$^1$, Jiaao Li$^2$, Xueliang Li$^3$, Meiling Wang$^3$\\
  {\small $^1$College of Mathematical Science, Tianjin Normal University, Tianjin 300387, China}\\
 {\small $^2$School of Mathematical Sciences and LPMC, Nankai University, Tianjin 300071, China}\\
 {\small$^3$Center for Combinatorics and LPMC, Nankai University, Tianjin 300071, China}\\
 {\footnotesize Emails: mmhan2018@hotmail.com; lijiaao@nankai.edu.cn; lxl@nankai.edu.cn; Estellewml@gmail.com}}

 \date{}
\maketitle

\begin{abstract}
Let $S,T$ be two distinct finite Abelian groups with $|S|=|T|$.
A fundamental theorem of Tutte shows that a graph admits a nowhere-zero $S$-flow
if and only if it admits a nowhere-zero $T$-flow. Jaeger, Linial, Payan and Tarsi in 1992
introduced group connectivity as an extension of flow theory,
and they asked whether such a relation holds for group connectivity analogy.
It was negatively answered by Hu\v{s}ek,  Moheln\'{i}kov\'{a} and \v{S}\'{a}mal in 2017 for graphs with edge-connectivity 2
for the groups $S=\Z_4$ and $T=\Z_2^2$. In this paper, we extend their results
to $3$-edge-connected graphs (including both cubic and general graphs),
which answers open problems proposed by Hu\v{s}ek, Moheln\'{i}kov\'{a} and \v{S}\'{a}mal(2017) and Lai, Li, Shao and Zhan(2011).
Combining some previous results, this characterizes all the equivalence
of group connectivity under $3$-edge-connectivity, showing that every
$3$-edge-connected $S$-connected graph is $T$-connected if and only
if $\{S,T\}\neq \{\Z_4,\Z_2^2\}$.
\\[2mm]
\textbf{Keywords:} nowhere-zero flows;   group connectivity; group flows
\\[2mm] \textbf{AMS Subject Classification (2010):} 05C21, 05C40, 05C15
\end{abstract}

\section{Introduction}

Graphs considered in this paper are finite and loopless, with possible parallel edges.
Throughout this paper, let $S, T$ be (additive) Abelian groups, and $\Z_k$ the cyclic group of order $k$.
We follow \cite{Bondy} for undefined notation and terminology.
Fix an orientation $D$ of a graph $G$. For any
$x\in V(G)$, let $E^+_D(x)$ ($E^-_D(x)$, resp.) denote the set of all edges
directed away from (into, resp.) $x$. Given a mapping $\varphi: E(G)\mapsto S$, define, for every vertex
$u\in V(G)$,
 \[
\partial\varphi(u)=\sum\limits_{e\in E^+_D(u)}\varphi(e) ~-\sum\limits_{e\in E^-_D(u)}\varphi(e).
\]
Evidently, we have $\sum_{u\in V(G)}\partial\varphi(u)= 0$ since each directed edge is counted exactly
once in both its head and tail. A {\bf zero-sum boundary function} is a mapping $\gamma: V(G) \mapsto S$
satisfying $\sum_{u\in V(G)}\gamma(u)= 0$, which is necessary for the existence of such mapping $\varphi$
with $\partial\varphi=\gamma$. Let {\bf $\ZA(G, S)$} denote the collection of all zero-sum boundary functions
of $G$. A group flow, {\bf $S$-flow}, of $G$ is a mapping $\varphi: E(G)\mapsto S$ with $\partial\varphi=\bf{0}$,
where ${\bf 0}\in \ZA(G, S)$ denotes the constant zero mapping. If $\varphi(e)\neq  0$ for each edge $e\in E(G)$,
then $\varphi$ is called a {\bf nowhere-zero $S$-flow}, abbreviated as $S$-NZF.
When $S=\Z$ and $0<|\varphi(e)|<k$ for any $e\in E(G)$, it is known as a {\bf nowhere-zero $k$-flow}, abbreviated as $k$-NZF.

The flow theory was initiated by Tutte \cite{Tutt54} in studying face coloring problems of graphs
on the plane and other surfaces. Tutte \cite{Tutt54} proposed some flow conjectures, which are considered as core problems in graph theory.
Tutte's $3$-flow and $5$-flow conjectures predict the existence of flow for given edge-connectivity $4$ and $2$, respectively,
regardless the topological embedding structures of graphs. The $4$-flow conjecture \cite{Tutt66}, generalizing the celebrated Four Coloring Theorem,
asserts every Petersen-minor-free graph admits a $4$-NZF.
Those problems are widely studied and remain open, while significant progress have been made by Jaeger \cite{Jaeger1979},
Seymour \cite{Seym1981}, Thomassen \cite{Thomassen2012}, and Lov\'asz et al. \cite{LTWZ2013}. We refer to \cite{LLZ15} for a recent survey on those topics.
One of the critical tools in studying nowhere-zero flows is the following fundamental theorem of Tutte \cite{Tutt66}, converting group flows into integer flows.
\begin{theorem}\cite{Tutt66}
A graph admits a $k$-NZF if and only if it admits an $S$-NZF for some Abelian group $S$ with $|S|=k$.
\label{Thm:Tutte66}
\end{theorem}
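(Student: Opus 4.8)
The plan is to prove the result in two largely independent parts: first that the mere \emph{existence} of a nowhere-zero $S$-flow depends only on the order $|S|=k$ and not on the algebraic structure of $S$, and second that for the concrete cyclic group $S=\Z_k$ a nowhere-zero $\Z_k$-flow exists precisely when a $k$-NZF exists. Combining these yields the stated equivalence, because the ``for some $S$'' in the statement can then be replaced by ``for $S=\Z_k$''.

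For the first part I would count nowhere-zero flows. Fix a reference orientation $D$. The set of all $S$-flows (mappings $\varphi$ with $\partial\varphi=\mathbf 0$) is the kernel of the boundary map $\partial\colon S^{E(G)}\to \ZA(G,S)$, hence a subgroup of $S^{E(G)}$. Since the incidence matrix of $G$ is totally unimodular, a spanning forest realizes every zero-sum boundary function over any Abelian group, so the image of $\partial$ has order $|S|^{|V(G)|-c(G)}$ and the kernel therefore has order $|S|^{\beta(G)}$, where $c(G)$ is the number of components and $\beta(G)=|E(G)|-|V(G)|+c(G)$ is the cyclomatic number; crucially this exponent is the same for every group of a given order. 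Applying inclusion--exclusion over the set $A\subseteq E(G)$ of edges allowed to be nonzero (equivalently, counting flows supported on $A$, of which there are $|S|^{\,|A|-|V(G)|+c(G|_A)}$ where $G|_A$ is the spanning subgraph with edge set $A$), the number of nowhere-zero $S$-flows equals
\[
\sum_{A\subseteq E(G)}(-1)^{|E(G)|-|A|}\,|S|^{\,|A|-|V(G)|+c(G|_A)}.
\]
This is a polynomial in $|S|$ alone, so the number of nowhere-zero $S$-flows depends only on $k=|S|$; in particular it is positive for one group of order $k$ iff it is positive for every group of order $k$, which gives the group-independence of existence.

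For the second part, the ``only if'' direction is immediate: reducing a $k$-NZF $\varphi$ (an integer flow with $0<|\varphi(e)|<k$) modulo $k$ produces a $\Z_k$-flow that is nowhere zero, since $\varphi(e)\not\equiv 0\pmod k$. The ``if'' direction is the substantive step. Given a nowhere-zero $\Z_k$-flow, I would lift each edge value to an integer in $\{-(k-1),\dots,-1,1,\dots,k-1\}$, obtaining an integer assignment $\psi$ that is nowhere zero and satisfies $\partial\psi(v)\equiv 0\pmod k$ at every vertex. Among all such lifts I would choose one minimizing $\sum_{v\in V(G)}|\partial\psi(v)|$ and argue that this quantity must be $0$: otherwise some vertex $x$ carries a positive multiple of $k$ as its boundary, and tracing a suitable directed trail from $x$ toward a vertex of negative boundary, flipping the orientation of edges as needed while keeping every value inside the allowed range, strictly decreases the total imbalance, contradicting minimality. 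Hence $\psi$ is a genuine integer flow with $0<|\psi(e)|<k$, i.e.\ a $k$-NZF.

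The main obstacle I anticipate is precisely this last augmenting argument: one must verify that a vertex of positive excess can always reach a vertex of negative excess along edges whose values can be shifted by $\pm k$ without leaving the range $\{-(k-1),\dots,k-1\}$, and that such a shift strictly reduces $\sum_{v}|\partial\psi(v)|$. The remaining ingredients are routine bookkeeping: the inclusion--exclusion count follows at once from the subgroup order $|S|^{\beta(G)}$, and the reduction modulo $k$ is trivial.
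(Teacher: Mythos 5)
The paper never proves this statement: Theorem \ref{Thm:Tutte66} is imported from Tutte \cite{Tutt66} as a black box, so your proposal can only be compared with the classical argument, which is exactly the route you chose (group-independence via flow counting, plus the equivalence of nowhere-zero $\Z_k$-flows and integer $k$-NZFs). Your first part is complete and correct: the kernel of $\partial$ on $S^{E(G)}$ has order $|S|^{|E(G)|-|V(G)|+c(G)}$ because a spanning forest realizes every boundary function summing to zero on each component (the appeal to total unimodularity is superfluous over a general Abelian group---the greedy leaf-by-leaf argument on the forest is what does the work), and M\"obius inversion over subsets of $E(G)$ gives a count that is a polynomial in $|S|$ alone. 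The ``only if'' direction of your second part (reduce an integer $k$-NZF modulo $k$) is also fine.

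The one genuine gap is exactly where you flagged it: you assert, but do not prove, that a vertex of positive excess can reach a vertex of negative excess along edges whose values can be shifted by $\pm k$ without leaving the allowed range. Here is the missing idea, which completes your sketch. First normalize the minimizing lift so that every value is positive, by reversing each edge carrying a negative value and negating that value; this changes neither $\sum_v|\partial\psi(v)|$ nor the underlying $\Z_k$-flow, and now $\psi:E(G)\to\{1,\dots,k-1\}$ with every $\partial\psi(v)$ a multiple of $k$. Suppose $\partial\psi(x)\ge k$ and let $Z$ be the set of vertices reachable from $x$ by directed paths. No edge leaves $Z$ (its head would again lie in $Z$), so $\sum_{v\in Z}\partial\psi(v)=-\sum_{e\ \text{entering}\ Z}\psi(e)\le 0$, and hence some $y\in Z$ has $\partial\psi(y)\le -k$. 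Take a directed path from $x$ to $y$, reverse each edge $e$ on it, and give it the value $k-\psi(e)\in\{1,\dots,k-1\}$ (still a lift of the same $\Z_k$-flow). This changes $\partial\psi$ by $-k$ at $x$, by $+k$ at $y$, and by $0$ at every intermediate vertex, so $\sum_v|\partial\psi(v)|$ drops by $2k$, contradicting minimality. With this reachability argument inserted, your proposal is a correct proof of the theorem.
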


The advantage of group flows is to provide much more flexibility in proving related integer flow theorems,
which allows to use certain contraction operations and local adjustments on graphs.
To facilitate this approach, Jaeger et al. \cite{JLPT92} introduced group connectivity concept as a generalization of $S$-flow.
If for every $\gamma\in {\ZA}(G, S)$, there is a mapping $\varphi: E(G)\mapsto S\setminus\{0\}$ such that $\partial\varphi=\gamma$,
then $G$ is called {\bf $S$-connected}. Due to certain stronger conditions in group connectivity, some nice properties of flows
can not be easily extended to group connectivity. For example, the monotonicity fails for group connectivity.
It follows from the definition that every $k$-NZF admissible graph has a $(k+1)$-NZF,
and so by Theorem \ref{Thm:Tutte66} every $T$-NZF admissible graph has an $S$-NZF for any finite Abelian groups $S,T$ with $|S|\ge |T|$.
However, Jaeger et al. \cite{JLPT92} showed that there exist $\Z_5$-connected graphs which are not $\Z_6$-connected,
and similar examples were exhibited for some other large groups of prime order.
On the positive side, an unusual monotonicity of group connectivity was proved in \cite{LLL2017}
that every $\Z_3$-connected graph is $S$-connected for $|S|\ge 4$.

For two distinct finite Abelian groups $S, T$ with the same order, Jaeger et al. \cite{JLPT92} asked
whether $S$-connectivity and $T$-connectivity are equivalent, similar as Theorem \ref{Thm:Tutte66},
and they remarked that it is even unknown for the first case concerning $\Z_4$ and $\Z_2^2$.
Lai et al. \cite{LLSZ11} further proposed the problem below for $3$-edge-connected graphs.

\begin{problem}\label{lai}{\em (Problem 1.8 in Lai et al. \cite{LLSZ11})}
Let ${\mathcal F}(S)$ be the family of all $3$-edge-connected $S$-connected graphs.
Is it true that for two Abelian groups $S_1$ and $S_2$, if $|S_1|=|S_2|$, then
$${\mathcal F}(S_1)={\mathcal F}(S_2)?$$
\end{problem}

With a computer-aided approach,  Hu\v{s}ek, Moheln\'{i}kov\'{a} and \v{S}\'{a}mal \cite{Husek}
constructed $2$-edge-connected graphs to show that $\Z_4$-connectivity and $\Z^2_2$-connectivity
are not equivalent and obtained the following theorem, which provides a negative
answer to the question of Jaeger et al. \cite{JLPT92}.

\begin{theorem}\cite{Husek}\label{2edgecon} Denote by $H_1, H_2$ as the graphs depicted in Figure \ref{fig2connected}.

(1) \ The graph $H_1$ is $\Z_2^2$-connected but not $\Z_4$-connected.

(2) \ The graph $H_2$ is $\Z_4$-connected but not $\Z_2^2$-connected.

\noindent Furthermore, infinitely many such examples can be constructed by replacing some vertices with triangles repeatedly.
\end{theorem}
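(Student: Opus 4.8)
The plan is to treat the two non-connectivity claims, the two positive connectivity claims, and the infinite-family statement as three separate tasks, since each needs a different kind of argument once the explicit graphs $H_1,H_2$ of Figure~\ref{fig2connected} are fixed. Throughout I would exploit two reformulations that expose the algebraic gap between the groups. First, writing a prospective $\Z_2^2$-valued map as a pair $(\varphi_1,\varphi_2)$ of $\Z_2$-valued maps, the condition $\varphi(e)\neq 0$ becomes ``$\varphi_1(e)$ and $\varphi_2(e)$ are not both $0$'', so $\Z_2^2$-connectivity is equivalent to: every pair of prescribed zero-sum $\Z_2$-boundaries $(\gamma_1,\gamma_2)$ is realized by maps $\varphi_1,\varphi_2$ whose supports together cover $E(G)$. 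Second, for $\Z_4$, reduction mod $2$ carries a nowhere-zero map into $\{0,1\}$ with $0$ occurring exactly on the edges valued by the order-two element $2$, thereby coupling a $\Z_2$-boundary constraint to an integral one. The structural distinction driving everything is that $\Z_2^2$ has no element of order $4$ while $\Z_4$ does.

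For the negative assertions---$H_1$ is not $\Z_4$-connected and $H_2$ is not $\Z_2^2$-connected---it suffices to exhibit in each case a single $\gamma\in\ZA(G,S)$ that no nowhere-zero $\varphi$ realizes. Using the reformulations above, the obstruction in each graph is a boundary forcing a support/covering pattern that one group can meet and the other cannot: for the $\Z_2^2$ side this is a demand that two $\Z_2$-realizations jointly cover a set of edges which any legal pair must miss, and for the $\Z_4$ side it is the incompatibility of the forced mod-$2$ reduction with the surviving integral freedom. I would verify the obstruction by a short case analysis on the few edges incident to a low-degree vertex, using the zero-sum condition to propagate forced edge values until a contradiction appears.

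For the positive assertions---$H_1$ is $\Z_2^2$-connected and $H_2$ is $\Z_4$-connected---the definition requires realizing \emph{every} zero-sum boundary, so the task is to show an a priori large family of systems is simultaneously solvable. My approach is to shrink this to a finite hand-check by three reductions: (i) use the automorphism group of $H_i$ to collapse boundaries into symmetry classes; (ii) apply the standard contraction principle---if an $S$-connected subgraph is contracted and the resulting smaller graph realizes the induced boundary, then so does $G$---after first identifying triangles and parallel classes that are themselves $\Z_4$- and $\Z_2^2$-connected; and (iii) on each surviving class confirm solvability via the inclusion-exclusion count $N(\gamma)=\sum_{F\subseteq E}(-1)^{|F|}\,|S|^{\,|E\setminus F|-|V|+c(G-F)}$ for the number of nowhere-zero $\varphi$ with $\partial\varphi=\gamma$ (the summand taken to be $0$ when $\gamma$ is incompatible with $G-F$), showing $N(\gamma)>0$ throughout.

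The infinite family I would obtain from one structural lemma: replacing a degree-three vertex by a triangle (the $Y$-to-$\Delta$ operation) preserves the $S$-connectivity status exactly for $S\in\{\Z_4,\Z_2^2\}$. The key computation is that the triangle $C_3$ is $S$-connected for both groups---given a zero-sum target, the single cyclic parameter must avoid at most three of the four group elements, so a nowhere-zero solution always survives. Preservation of connectivity then follows from the contraction principle (contracting the triangle returns the original graph), and preservation of \emph{non}-connectivity follows by pulling the obstructing boundary back through the expansion: any nowhere-zero realization on the expanded graph contracts to one on the original, contradicting its failure. Iterating at suitable vertices of $H_1$ and $H_2$ yields infinitely many examples of each type. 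The genuine obstacle is the positive direction: even after symmetry and contraction one faces a residual set of boundaries with no single invariant certifying success, which is exactly why the original argument was computer-aided; a fully human proof would hinge on contracting enough $S$-connected substructures to make the residual check small while keeping the inclusion-exclusion sum positive on every surviving class.
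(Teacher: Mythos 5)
There is a genuine gap, and it is worth noting first that the paper you are being compared against does \emph{not} prove this statement at all: Theorem~\ref{2edgecon} is quoted verbatim from Hu\v{s}ek, Moheln\'{i}kov\'{a} and \v{S}\'{a}mal \cite{Husek}, whose proof is computer-aided, and the present paper explicitly says it ``assumes the truth of Theorem~\ref{2edgecon}'' as a black box. So the real question is whether your proposal constitutes a self-contained proof, and it does not. For the two positive assertions ($H_1$ is $\Z_2^2$-connected, $H_2$ is $\Z_4$-connected) you outline three reductions (automorphisms, contraction of $S$-connected subgraphs, and the inclusion-exclusion count $N(\gamma)$), but you never show that these reductions bring the problem down to a feasible hand-check, and you concede this yourself in the final sentence: ``a residual set of boundaries with no single invariant certifying success, which is exactly why the original argument was computer-aided.'' That concession is the whole theorem: $H_i$ has roughly fourteen vertices, so $\ZA(H_i,S)$ contains on the order of $4^{13}$ boundaries, and nothing in the proposal demonstrates that the surviving symmetry classes are few enough, or that $N(\gamma)>0$ can be certified uniformly across them, without a machine. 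Likewise, for the two negative assertions you never exhibit a concrete failed boundary $\gamma$ nor carry out the claimed ``short case analysis''; proving non-realizability of a boundary requires excluding \emph{all} nowhere-zero maps on a graph with about twenty edges, which is a global argument, not a local propagation from one low-degree vertex (that kind of propagation works for the paper's Lemma~\ref{Prop:K_4} precisely because the graph there is $K_4$).

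The one part of your proposal that is complete and correct is the infinite-family step: the triangle-expansion (Y-to-$\Delta$) lemma. Contracting the new triangle (which is $S$-connected for $|S|\ge 4$ by Lemma~\ref{con}(1)) shows expansion preserves $S$-connectivity, and restricting a hypothetical nowhere-zero realization on the expanded graph to the original edge set pulls an obstructing boundary back, so non-$S$-connectivity is preserved as well; this is exactly the mechanism the paper itself exploits (via Lemma~\ref{con} and Lemma~\ref{2sum}) to generate infinite families. But that lemma only propagates the properties of $H_1$ and $H_2$; it cannot establish them. As it stands, your proposal is a research plan whose hard core --- the finite but enormous verification --- is deferred, which is precisely the part that forced the original authors to use a computer.
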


\begin{figure}[ht]
\setlength{\unitlength}{0.10cm}

\begin{center}

\begin{picture}(60,20)

\put(0,0){\circle*{1.2}}\put(-10,0){\circle*{1.2}}\put(-10,10){\circle*{1.2}}\put(10,10){\circle*{1.2}}\put(10,-10){\circle*{1.2}}\put(0,-15){\circle*{1.2}}\put(-10,-10){\circle*{1.2}}
\put(0,15){\circle*{1.2}}\put(15,10){\circle*{1.2}}\put(15,-10){\circle*{1.2}}\put(0,-20){\circle*{1.2}}\put(-15,-10){\circle*{1.2}}\put(-15,10){\circle*{1.2}}\put(-15,0){\circle*{1.2}}
\put(5,-12.5){\circle*{1.2}}

\qbezier(-10,10)(0,10)(10,10)\qbezier(10,-10)(10,0)(10,10)\qbezier(10,-10)(5,-12.5)(0,-15)\qbezier(-10,-10)(-5,-12.5)(0,-15)\qbezier(-10,-10)(-10,-5)(-10,0)\qbezier(-10,0)(-10,5)(-10,10)
\qbezier(0,15)(0,7.5)(0,0)\qbezier(0,0)(2.5,-6.25)(5,-12.5)\qbezier(10,10)(12.5,10)(15,10)\qbezier(10,-10)(12.5,-10)(15,-10)\qbezier(0,-15)(0,-17.5)(0,-20)\qbezier(-10,-10)(-12.5,-10)(-15,-10)
\qbezier(-15,10)(-12.5,10)(-10,10)\qbezier(0,15)(7.5,12.5)(15,10)\qbezier(15,10)(15,0)(15,-10)\qbezier(15,-10)(7.5,-15)(0,-20)\qbezier(0,-20)(-7.5,-15)(-15,-10)\qbezier(-15,-10)(-15,0)(-15,10)
\qbezier(-15,10)(-7.5,12.5)(0,15)

\put(50,0){\circle*{1.2}}\put(40,0){\circle*{1.2}}\put(40,10){\circle*{1.2}}\put(60,10){\circle*{1.2}}\put(60,-10){\circle*{1.2}}\put(50,-15){\circle*{1.2}}\put(40,-10){\circle*{1.2}}
\put(50,15){\circle*{1.2}}\put(65,10){\circle*{1.2}}\put(615,-10){\circle*{1.2}}\put(50,-20){\circle*{1.2}}\put(35,-10){\circle*{1.2}}\put(35,10){\circle*{1.2}}\put(42.5,-15){\circle*{1.2}}
\put(55,-12.5){\circle*{1.2}}
\qbezier(40,10)(50,10)(60,10)\qbezier(60,-10)(60,0)(60,10)\qbezier(60,-10)(55,-12.5)(50,-15)\qbezier(40,-10)(45,-12.5)(50,-15)\qbezier(40,-10)(40,-5)(40,0)\qbezier(40,0)(40,5)(40,10)
\qbezier(50,15)(50,7.5)(50,0)\qbezier(50,0)(52.5,-6.25)(55,-12.5)\qbezier(60,10)(62.5,10)(65,10)\qbezier(60,-10)(62.5,-10)(65,-10)\qbezier(50,-15)(50,-17.5)(50,-20)\qbezier(40,-10)(37.5,-10)(35,-10)
\qbezier(35,10)(37.5,10)(40,10)\qbezier(50,15)(57.5,12.5)(65,10)\qbezier(65,10)(65,0)(65,-10)\qbezier(65,-10)(57.5,-15)(50,-20)\qbezier(50,-20)(42.5,-15)(35,-10)\qbezier(35,-10)(35,0)(35,10)
\qbezier(35,10)(42.5,12.5)(50,15)

\put(-3,-27){$H_1$}\put(47,-27){$H_2$}

\end{picture}
\vspace{2cm}
\end{center}
\caption{\small\it The graphs for Theorem \ref{2edgecon}.}
\label{fig2connected}
\end{figure}
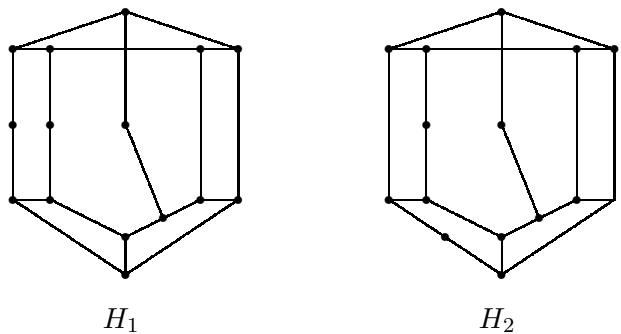

By developing a $2$-sum operation for group connectivity (as defined below), we extend Theorem \ref{2edgecon} to $3$-edge-connected graphs.

\begin{theorem}\label{mainth}
~
\begin{itemize}
\item[(1)] There exists a $3$-edge-connected graph which is $\Z_4$-connected but not $\Z_{2}^2$-connected.
\item[(2)] There exists a $3$-edge-connected graph which is $\Z_{2}^2$-connected but not $\Z_4$-connected.
\end{itemize}
Furthermore, infinitely many such graphs can be generated by a number of $2$-sum operations.
\end{theorem}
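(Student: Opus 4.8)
The plan is to reduce Theorem \ref{mainth} to the $2$-edge-connected examples $H_1,H_2$ of Theorem \ref{2edgecon} by means of the $2$-sum. Recall that the $2$-sum $G_1\oplus_2 G_2$ of graphs $G_1,G_2$ with chosen edges $e_i=u_iv_i\in E(G_i)$ is formed by deleting $e_1,e_2$ and identifying $u_1$ with $u_2$ and $v_1$ with $v_2$ into two new vertices $x,y$. Two facts drive the argument. First, a connectivity fact: if $G_1$ and $G_2$ are both $3$-edge-connected, then so is $G_1\oplus_2 G_2$; this follows by a routine case analysis on an arbitrary edge cut $(X,\overline X)$ according to whether $x,y$ lie on the same side (where a small cut of $G_1\oplus_2 G_2$ would restrict to a small cut of some $G_i$) or on opposite sides (where each of $G_1-e_1$ and $G_2-e_2$ must already contribute at least two crossing edges, for a total of at least four). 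Second, and more importantly, a group-connectivity transfer lemma, which I would prove by splitting boundary functions.

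For the positive transfer I claim: if $G_1$ is $S$-connected and $G_2-e_2$ is $S$-connected, then $G_1\oplus_2 G_2$ is $S$-connected. Given $\gamma\in\ZA(G_1\oplus_2 G_2,S)$, first choose any $\gamma_1\in\ZA(G_1,S)$ agreeing with $\gamma$ on the interior $V(G_1)\setminus\{u_1,v_1\}$; since $G_1$ is $S$-connected there is a nowhere-zero $\varphi_1$ with $\partial\varphi_1=\gamma_1$, and I discard its value $c_1\neq 0$ on the deleted edge $e_1$. The boundary demand that $G_2-e_2$ must then meet at $x,y$ is determined, and a direct computation using that both $\gamma$ and $\gamma_1$ are zero-sum shows this demand is itself a zero-sum boundary of $G_2-e_2$; as $G_2-e_2$ is $S$-connected it is realized by a nowhere-zero $\varphi_2$, and the restriction of $\varphi_1$ to $E(G_1)\setminus\{e_1\}$ together with $\varphi_2$ is the desired nowhere-zero flow. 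Crucially this argument does not require $G_1-e_1$ to be $S$-connected, so it applies with $G_1=H_1$ (or $H_2$), which is only $2$-edge-connected.

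Using these two facts I would build the base examples as follows. Take $G_1=H_2$, which is $\Z_4$-connected but not $\Z_2^2$-connected, and for each $2$-edge-cut of $H_2$ perform a $2$-sum, at one edge $a$ of that cut, with a fixed small $3$-edge-connected gadget $W$ (with chosen edge $e$) for which $W-e$ is $\Z_4$-connected. Deleting the cut-edge $a$ turns the other cut-edge into a bridge, and the at least two internally edge-disjoint $x$--$y$ paths surviving inside $W-e$ reconnect its two sides; so a case analysis like the one above shows that after treating every $2$-edge-cut the resulting graph $G$ is $3$-edge-connected, while the positive transfer lemma with $S=\Z_4$ keeps $G$ being $\Z_4$-connected. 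The symmetric construction starting from $H_1$, using a gadget whose $W-e$ is $\Z_2^2$-connected, handles part (1). The \emph{Furthermore} clause is then immediate once a single base example of each type is in hand: $2$-summing such a base graph (as $G_1$) with any $3$-edge-connected gadget $W$ for which $W-e$ is $S$-connected keeps the graph $3$-edge-connected by the connectivity fact and $S$-connected by the positive transfer, while each $2$-sum strictly enlarges the graph, yielding infinitely many pairwise non-isomorphic examples.

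The hard part is the \emph{negative} half of each base example, namely that the $3$-edge-connected graph $G$ built from $H_2$ is still not $\Z_2^2$-connected (and dually for $H_1$ and $\Z_4$). For this I would fix the bad boundary $\gamma^\ast\in\ZA(H_2,\Z_2^2)$ guaranteed by Theorem \ref{2edgecon}, extend it to a boundary $\gamma$ on $G$ by prescribing values on the gadget copies, and assume for contradiction a nowhere-zero $\Z_2^2$-flow $\varphi$ on $G$. The goal is to show that $\varphi$ forces, on the $H_2$-part, a nowhere-zero $\Z_2^2$-flow realizing $\gamma^\ast$, contradicting its badness. This is exactly where the design of $W$ matters: $W$ must be flexible enough that $W-e$ is $S$-connected for the positive direction, yet rigid enough that the flow it transmits across each $\{x,y\}$ pair is constrained (lying in a prescribed coset, and in particular nonzero), so that the discarded value on the virtual edge $e$ is pinned to precisely the value needed to reproduce $\gamma^\ast$. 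Controlling this transmitted value simultaneously for $\Z_4$ and against $\Z_2^2$---exploiting that a triangle, or the chosen small gadget, behaves differently under the cyclic group than under the Klein four-group---is the crux, and is the step I expect to demand the most delicate, possibly computer-assisted, case analysis.
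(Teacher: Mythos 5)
Your positive ingredients are sound: the transfer lemma (``$G_1$ $S$-connected and $G_2-e_2$ $S$-connected implies the $2$-sum is $S$-connected'') is correct as you sketch it, and it is essentially interchangeable with the contraction tool the paper uses (Lemma \ref{con}(2)). The genuine gap is that the negative half of each base example --- the actual content of Theorem \ref{mainth} --- is never proved: you reduce it to designing a gadget $W$ that is ``rigid enough that the flow it transmits \ldots is pinned,'' and you explicitly defer that step to a delicate, possibly computer-assisted case analysis. The idea you are missing is that no rigidity or gadget design is needed at all. The paper proves (Lemma \ref{2sum}) that the $2$-sum of \emph{any} two non-$S$-connected graphs is non-$S$-connected, by a short boundary-splitting argument: take failed boundaries $\beta_1\in\ZA(\Gamma_1,S)$ and $\beta_2\in\ZA(\Gamma_2,S)$, and let $\varepsilon$ be the boundary on the $2$-sum obtained by adding $\beta_1$ and $\beta_2$ at the two identified vertices. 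If some nowhere-zero $\eta$ realized $\varepsilon$, then its restriction to $\Gamma_2$ has boundary agreeing with $\beta_2$ off $\{u_2,v_2\}$ and with the correct total on $\{u_2,v_2\}$; since $\beta_2$ is failed, this restriction must differ from $\beta_2$ by $+b$ at $u_2$ and $-b$ at $v_2$ for some $b\neq 0$, and then assigning the reinstated edge $u_1v_1$ the value $b$ yields a nowhere-zero realization of $\beta_1$ in $\Gamma_1$, a contradiction. The nonzeroness of the transmitted value $b$ comes for free from the failedness of $\beta_2$ --- exactly the ``pinning'' you thought required a bespoke $W$. With this lemma the construction becomes simple and wholly theoretical: use $C_4$ as a hub (not $S$-connected for $|S|=4$ by Lemma \ref{con}(1)), and $2$-sum copies of $H_i$ (and of the intermediate composite $H_i^2$) onto edges of $C_4$ so that every $2$-vertex is consumed by an identification; $3$-edge-connectivity then holds structurally, the positive group-connectivity follows by contracting the $S$-connected copies, and the negative one follows by iterating Lemma \ref{2sum}.

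The same omission infects your ``furthermore'' clause: when generating infinitely many examples by further $2$-sums you verify $3$-edge-connectivity and preservation of $S$-connectivity, but never that non-$T$-connectivity is preserved; that, too, needs the negative transfer lemma, together with a gadget that is itself non-$T$-connected (your requirement that $W-e$ be $S$-connected does not force this --- e.g.\ a gadget with $W-e$ being $\Z_3$-connected would destroy the negative property). A smaller issue: your connectivity fact assumes both summands are $3$-edge-connected, which $H_1$ and $H_2$ are not; the per-$2$-cut patching you propose instead is plausible but only sketched, whereas the paper gets $3$-edge-connectivity by arranging the $2$-sums so that all $2$-vertices are identified away. In short, your skeleton (reduce to $H_1,H_2$ via $2$-sums) matches the paper, but without Lemma \ref{2sum} the proposal proves neither the base examples' negative properties nor the infinitude claim.
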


It is worth noting that our proof of Theorem \ref{mainth} is theoretical, although it assumes the truth of Theorem \ref{2edgecon} (whose proof is computer-aided).

Extending Jaeger's $4$-flow theorem and Seymour's $6$-flow theorem, Jaeger et al. \cite{JLPT92} obtained the following group connectivity analogy.
\begin{theorem}\label{thmJLPT}\cite{JLPT92}
(i) \ Every $4$-edge-connected graph is $S$-connected for $|S|\ge 4$.\\
(ii) \ Every $3$-edge-connected graph is $S$-connected for $|S|\ge 6$.
\end{theorem}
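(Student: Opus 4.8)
The plan is to prove both parts by a single two-step strategy. First, invoke an edge-packing/decomposition theorem to extract from the connectivity hypothesis a spanning structure on which an assignment can be built, and second, realize an arbitrary boundary $\gamma\in\ZA(G,S)$ by starting from one convenient function $\varphi_0$ with $\partial\varphi_0=\gamma$ and then correcting it by a flow (an element of the cycle space) chosen to destroy every zero. The key reformulation is that $G$ is $S$-connected exactly when the affine coset $\{\varphi:\partial\varphi=\gamma\}$ always meets $(S\setminus\{0\})^{E(G)}$; since this coset is $\varphi_0$ plus the cycle space, the whole problem becomes finding a flow $\psi$ with $\psi(e)\neq-\varphi_0(e)$ for every edge $e$, and the slack in $|S|$ is precisely what lets a correcting flow dodge the single forbidden value at each edge.

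For part (i), the plan is to use that a $4$-edge-connected graph contains two edge-disjoint spanning trees $T_1,T_2$, by the Nash--Williams and Tutte tree-packing theorem (this is the group-connectivity analogue of Jaeger's $4$-flow theorem \cite{Jaeger1979}). I would take $\varphi_0$ to vanish off $T_1$, so the $T_1$-edges are forced by $\gamma$, and then correct the forced zeros using $T_2$ together with the symmetric basis-exchange bijection $\pi:E(T_1)\to E(T_2)$ of the cycle matroid: each $e\in E(T_1)$ is paired with a co-tree edge $\pi(e)$ crossing the fundamental cut of $e$, and adding a multiple of the fundamental cycle of $\pi(e)$ moves $\varphi(e)$ while the exchange structure keeps the pairing essentially triangular. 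Processing edges in the induced order, the value assigned to each $\pi(e)$ must only avoid its own zero, the value that would force $\varphi(e)=0$, and one interference term, so an elementary count gives at most three forbidden elements per edge; hence $|S|\ge 4$ always admits a legal nonzero choice and completes $\varphi$.

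For part (ii), I would follow Seymour's $6$-flow theorem \cite{Seym1981}. Its engine, specialized to the given $3$-edge-connected $G$, produces a $\Z_3$-flow whose set of zero edges forms an even subgraph, which then carries a nowhere-zero $\Z_2$-flow; the pair is a nowhere-zero $\Z_2\times\Z_3$-flow, i.e.\ a $6$-flow. To upgrade this to $S$-connectivity I would build $\varphi_0$ realizing $\gamma$ and run Seymour's cycle-by-cycle construction so as to steer the correcting flow: the two "coordinates" are kept never simultaneously zero while the endpoints of each added ear absorb the prescribed boundary. The requirement on each correcting value is again to miss a bounded list of forbidden elements, and the relevant bound matches the threshold $6=2\cdot 3$, which is why $|S|\ge 6$ rather than $|S|\ge 4$ is needed here.

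The hard part will be controlling the forbidden-value count so that it never cascades. In part (i) this is the combinatorial heart: one must arrange a processing order from the matroid basis exchange so that fixing a forced edge never resurrects a zero already killed, keeping the per-edge bound at three rather than letting it grow with the vertex degrees — note that the bound cannot be pushed to two, since that would give $\Z_3$-connectivity of all $4$-edge-connected graphs and hence settle Tutte's $3$-flow problem. In part (ii) the genuine obstacle is twofold: adapting Seymour's flow construction, which is tailored to the zero boundary, to an arbitrary prescribed $\gamma$; and covering every Abelian group of order at least $6$ uniformly rather than only $\Z_6\cong\Z_2\times\Z_3$. Because group connectivity is not monotone (as the excerpt records for $\Z_5$ versus $\Z_6$), the larger-group cases do not follow formally from the order-$6$ case and must be obtained from the structural slack directly.
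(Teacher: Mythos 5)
You should first note that the paper contains no proof of this statement at all: Theorem \ref{thmJLPT} is quoted from Jaeger, Linial, Payan and Tarsi \cite{JLPT92}, so the only meaningful comparison is with their original arguments. Your skeleton does match the classical route --- part (i) rests on the Nash--Williams/Tutte packing theorem (4-edge-connectivity yields two edge-disjoint spanning trees), part (ii) on adapting the machinery of Seymour's 6-flow theorem \cite{Seym1981} --- and your coset reformulation (realize one $\varphi_0$ with $\partial\varphi_0=\gamma$, then correct by an element of the cycle space avoiding one forbidden value per edge) is correct and standard. So the issue is not the strategy but the execution.

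Both parts have a genuine gap exactly at the step you label ``the hard part.'' In (i), the claim that the basis-exchange bijection $\pi:E(T_1)\to E(T_2)$ admits a processing order in which each correcting value must dodge at most three elements is asserted, never proved, and nothing in the symmetric exchange property supports it: the fundamental cycle of a $T_2$-edge may contain linearly many $T_1$-edges, so one choice $\psi(\pi(e))$ can disturb many constraints that were already satisfied, and no argument caps the interference at a single term. Your own observation that the count cannot be pushed down to two is telling: a perfectly triangular order would give $\Z_3$-connectivity of every graph with two edge-disjoint spanning trees, which is false already for $K_4$ (a union of two edge-disjoint Hamiltonian paths, yet with no nowhere-zero $3$-flow since it is cubic and non-bipartite). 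Hence the structure you need --- ``triangular plus at most one interference per edge'' --- cannot come from triangularity alone, and establishing it \emph{is} the content of the theorem; it is missing. In (ii), ``steering'' Seymour's construction is a goal, not an argument: Seymour's proof hinges on each augmentation adding at most two edges closing a cycle with the current subgraph, so that a $\Z_3$-shift has at most two forbidden values, and you must show how to additionally realize an arbitrary prescribed boundary at every internal vertex while keeping all values nonzero. Worse, your device of keeping ``two coordinates never simultaneously zero'' presupposes a product structure $\Z_2\times\Z_3$; it is unavailable for $\Z_7$, $\Z_8$, $\Z_2^3$, and as you yourself note, non-monotonicity of group connectivity blocks any formal reduction from the order-$6$ case to larger groups. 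What is needed (and what \cite{JLPT92} supplies) is a uniform counting argument valid for every Abelian group of order at least $6$; the proposal acknowledges this requirement but does not provide it.
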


Combining Theorems \ref{mainth} and \ref{thmJLPT}, we immediately have the following corollary,
characterizing the equivalence of group connectivity for all $3$-edge-connected graphs completely.
This answers Problem \ref{lai}.

\begin{corollary}\label{Cor:3connn}
Let $S,T$ be two distinct Abelian groups with $|S|=|T|$. Then every $3$-edge-connected $S$-connected graph
is $T$-connected if and only if $\{S,T\}\neq \{\Z_4,\Z_2^2\}$.
\end{corollary}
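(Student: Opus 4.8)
The plan is to prove the two directions of the biconditional separately, noting first that Theorems \ref{mainth} and \ref{thmJLPT} already supply all the real content, so that what remains is a short case analysis governed by the classification of Abelian groups of small order. At the outset I would fix the convention that ``distinct'' means non-isomorphic, so that $\{S,T\}$ ranges over unordered pairs of non-isomorphic Abelian groups with $|S|=|T|$.

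For necessity I would argue by contraposition. Assume $\{S,T\}=\{\Z_4,\Z_2^2\}$. Then Theorem \ref{mainth}(1) provides a $3$-edge-connected graph that is $\Z_4$-connected but not $\Z_2^2$-connected; taking $S=\Z_4$ and $T=\Z_2^2$, this is a $3$-edge-connected $S$-connected graph that is not $T$-connected, so the implication ``every $3$-edge-connected $S$-connected graph is $T$-connected'' fails, with Theorem \ref{mainth}(2) covering the reversed labelling. Hence, whenever that implication holds we must have $\{S,T\}\neq\{\Z_4,\Z_2^2\}$.

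For sufficiency I would split on the common order $|S|=|T|$. If $|S|\ge 6$, then Theorem \ref{thmJLPT}(ii) guarantees that every $3$-edge-connected graph is $W$-connected for every Abelian group $W$ of order at least $6$; applying this to both $S$ and $T$ shows that $\mathcal{F}(S)$ and $\mathcal{F}(T)$ each coincide with the family of all $3$-edge-connected graphs, so in particular every $3$-edge-connected $S$-connected graph is $T$-connected. If instead $|S|\le 5$, then since there is a unique Abelian group of each order $1,2,3,5$ and exactly two of order $4$, namely $\Z_4$ and $\Z_2^2$, the hypothesis $S\neq T$ forces $|S|=4$ and therefore $\{S,T\}=\{\Z_4,\Z_2^2\}$, contradicting the standing assumption $\{S,T\}\neq\{\Z_4,\Z_2^2\}$; thus this subcase is vacuous. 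Together the two subcases establish the implication for every admissible pair, completing sufficiency.

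I expect no genuine obstacle in the corollary itself: all the difficulty is concentrated upstream in Theorem \ref{mainth}, whose construction and verification (granting Theorem \ref{2edgecon}) constitute the substantive part of the paper. The only points I would be careful to record are the interpretation of ``distinct'' as up to isomorphism and the elementary classification of Abelian groups of order at most $5$; together these make the case split $|S|\ge 6$ versus $|S|\le 5$ exhaustive and isolate $\{\Z_4,\Z_2^2\}$ as the sole obstruction. It is perhaps worth remarking that order $6$ admits only $\Z_6$, since $\Z_2\times\Z_3\cong\Z_6$, so no distinct pair arises there, which dovetails with $6$ being precisely the threshold in Theorem \ref{thmJLPT}(ii).
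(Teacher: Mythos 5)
Your proof is correct and takes essentially the same route as the paper, which derives Corollary \ref{Cor:3connn} simply by ``combining Theorems \ref{mainth} and \ref{thmJLPT}'': Theorem \ref{mainth} supplies the counterexamples for $\{S,T\}=\{\Z_4,\Z_2^2\}$ (both labellings), and Theorem \ref{thmJLPT}(ii) handles all pairs of common order at least $6$. Your explicit case split and the classification of Abelian groups of order at most $5$ just spell out the details the paper leaves implicit in the word ``immediately''.
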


In \cite{Husek}, Hu\v{s}ek et al. also asked whether such $3$-edge-connected cubic graphs exist.
In fact, Theorem \ref{mainth} was obtained in early 2018, and the second author communicated with
Robert \v{S}\'{a}mal in SIAM Conference on Discrete Mathematics, Denver, June 2018.
The existence of such $3$-edge-connected cubic graphs was still open for a while, see Section 5
in Hu\v{s}ek et al. \cite{Husek}. Now we are able to solve it by a new construction method.

\begin{theorem}\label{mainthmcubic}
~
\begin{itemize}
\item[(1)] There exists a $3$-edge-connected cubic graph which is $\Z_4$-connected but not $\Z_{2}^2$-connected.
\item[(2)] There exists a $3$-edge-connected cubic graph which is $\Z_{2}^2$-connected but not $\Z_4$-connected.
 \end{itemize}
Moreover, infinitely many such graphs can be constructed by substituting some vertices with triangles repeatedly.
\end{theorem}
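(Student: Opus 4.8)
The plan is to separate the statement into two tasks: reducing the ``infinitely many'' clause to the existence of a single cubic example in each of (1) and (2), and then producing those two initial examples by a degree-reducing expansion of the $3$-edge-connected graphs supplied by Theorem \ref{mainth}. Throughout I would use the standard contraction reduction for group connectivity, namely that if a subgraph $H$ of $G'$ is $S$-connected then $G'$ is $S$-connected if and only if $G'/H$ is $S$-connected.

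First I would record the triangle-substitution step, which handles the last sentence of the theorem. Let $G$ be cubic and $3$-edge-connected, let $v$ have incident edges $e_1,e_2,e_3$, and let $G'$ be obtained by replacing $v$ with a triangle $T_v$ whose three vertices receive $e_1,e_2,e_3$. Then $G'$ is again cubic and $3$-edge-connected, and contracting $T_v$ gives back $G$. A triangle is the cycle $C_3$, and a direct check shows $C_3$ is $S$-connected for every $S$ with $|S|\ge 4$: orienting $C_3$ cyclically, the three edge values are determined by a single free value $x$ together with the prescribed zero-sum boundary, and the three ``forbidden'' choices of $x$ cannot exhaust a group of order at least $4$. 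Hence $T_v$ is both $\Z_4$-connected and $\Z_2^2$-connected, and the contraction reduction shows that $G'$ is $\Z_4$-connected (resp.\ $\Z_2^2$-connected) exactly when $G$ is. Thus one triangle substitution preserves all four relevant properties simultaneously, and iterating it turns any single initial example into infinitely many pairwise non-isomorphic ones.

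It remains to construct one $3$-edge-connected cubic graph that is $\Z_4$-connected but not $\Z_2^2$-connected and one with the roles reversed. I would start from the graphs $G_1$ ($\Z_2^2$-connected, not $\Z_4$-connected) and $G_2$ ($\Z_4$-connected, not $\Z_2^2$-connected) furnished by Theorem \ref{mainth}, and make each cubic by replacing every vertex $v$ of degree $d\ge 4$ with a cubic gadget $W_d$ carrying $d$ legs (one per edge formerly at $v$), the legs attached at distinct degree-$2$ interior vertices so that contracting the interior of $W_d$ returns $v$. If each $W_d$ is chosen with interior $S$-connected for \emph{both} $S=\Z_4$ and $S=\Z_2^2$, then applying the contraction reduction once per expanded vertex shows that the resulting cubic graph is $\Z_4$-connected (resp.\ $\Z_2^2$-connected) precisely when $G_2$ (resp.\ $G_1$) is; the asymmetric status is transferred verbatim, and the same family of gadgets serves both (1) and (2). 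Taking each $W_d$ to be $3$-edge-connected keeps the whole graph cubic and $3$-edge-connected.

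The hard part is exactly the design and verification of the gadgets $W_d$. Because a cubic graph is at most $3$-edge-connected, neither clause of Theorem \ref{thmJLPT} certifies $\Z_4$- or $\Z_2^2$-connectivity for free, so I expect to verify each $W_d$ by hand, peeling off triangles and copies of $K_4$ one at a time with the contraction reduction and checking that the surviving boundary demands are realizable by nowhere-zero values in both groups of order $4$. The delicate point is that the naive gadget, a cycle of triangles, contracts down to a long cycle $C_d$, which fails to be $S$-connected when $|S|=4$ and $d\ge 4$; likewise a subdivided $K_4$ can fail because splitting a prescribed edge value into two nonzero halves forces a second forbidden value. The gadget must therefore be given enough internal $K_4$-type connectivity to stay $S$-connected for both $\Z_4$ and $\Z_2^2$ while remaining cubic, $3$-edge-connected, and carrying the prescribed number of legs; achieving this balance is the crux, and the $2$-sum operation developed for Theorem \ref{mainth} is a natural tool both for attaching such gadgets and for maintaining $3$-edge-connectivity. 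Once a suitable family $\{W_d\}$ is exhibited and checked, the two initial examples follow, and the triangle-substitution step of the second paragraph completes the proof of Theorem \ref{mainthmcubic}.
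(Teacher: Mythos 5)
Your first paragraph (triangle substitution) is correct and matches the paper's closing remark: $C_3$ is $S$-connected for $|S|\ge 4$, so Lemma \ref{con} transfers all four relevant properties across each substitution. The genuine gap is in the main construction: the gadgets $W_d$ are never exhibited, and in fact no such gadget can exist, so the step you yourself call the crux is not merely unfinished but impossible as specified. In your scheme each attachment vertex of the cubic gadget keeps exactly two interior edges, so the interior of $W_d$ (the subgraph you contract) is a graph of maximum degree $3$ with exactly $d$ vertices of degree $2$, and it must be $S$-connected for both groups of order $4$; such a graph has $(3n-d)/2$ edges on $n$ vertices. But $S$-connectedness for $|S|=4$ imposes the known density requirement $|E|\ge n-1+\lceil (n-1)/2\rceil=\lceil 3(n-1)/2\rceil$ (a necessary condition for group connectivity going back to Jaeger, Linial, Payan and Tarsi \cite{JLPT92}; see also the survey \cite{LLSZ11} --- it is the same obstruction that makes $C_n$ $S$-connected only for $|S|\ge n+1$), and $(3n-d)/2<\lceil 3(n-1)/2\rceil$ as soon as $d\ge 4$. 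So legal gadgets exist at best for $d\le 3$; consistently, $H_1$ and $H_2$ have exactly three $2$-vertices, sit exactly at the density bound, and even then each is connected for only one of the two groups. Since $H_1^3$ and $H_2^3$ from Theorem \ref{mainth} each have two vertices of degree $4$ (the two middle vertices of the $C_4$ used in the three $2$-sums), your reduction requires $W_4$, which cannot exist; and no choice of starting graph helps, since any non-cubic $3$-edge-connected example has a vertex of degree at least $4$. As a concrete check, the natural candidate interior --- $K_4$ with the four edges of a $4$-cycle subdivided --- fails: give each subdivision vertex boundary $2$ in $\Z_4$, one branch vertex boundary $1$, the opposite branch vertex $3$, the other two $0$; each subdivided path is then forced to carry values in $\{1,3\}$, and eliminating the two diagonal edge values from the four branch-vertex equations leaves one equation whose left side is an alternating sum of four odd elements (hence even) and whose right side is odd. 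A similar quotient-by-$\{0,(1,1)\}$ parity argument defeats $\Z_2^2$.

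This impossibility is exactly why the paper's Section \ref{sec:cubic} proceeds differently, and why the cubic case needed a new idea rather than contraction bookkeeping. The paper replaces every vertex of $K_4$ (resp.\ of the $3$-prism) by a copy of $H_1$ (resp.\ $H_2$) --- that is, by the graph which is \emph{not} connected for the target group. The positive property still follows from Lemma \ref{con}, since contracting all copies leaves $K_4$ or the prism, each of which is both $\Z_4$- and $\Z_2^2$-connected; but the negative property cannot come from any iff-type contraction or $2$-sum argument (the copies are not $S$-connected for the relevant $S$, and the contracted graph is $S$-connected for both groups). Instead the paper proves two rigidity statements, Lemma \ref{Prop:K_4} (every $\Z_4$-flow of $K_4$ with all-$1$ boundary has a vertex all of whose edges enter it with value $1$) and Lemma \ref{Prop:3prism} (unique $3$-edge-colorability of the prism), which pin down the flow values on the three edges leaving some copy and thereby force that copy to realize its failed boundary, a contradiction. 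An ingredient of this kind is unavoidable here, and it is precisely what your proposal lacks.
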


The paper is organized as follows. In Section \ref{sec:2sum}  we first develop a $2$-sum operation for group connectivity
and use it to prove Theorem \ref{mainth}. Then in Section \ref{sec:cubic} we apply a new method to construct such
cubic graphs through flow properties of two special graphs.
In Section \ref{sec:collapsible}, we end this paper with a few concluding remarks.

\section{Constructions via $2$-sum operations}
\label{sec:2sum}

For $1\le i\le 2$, let $\Gamma_i$ be a graph with two distinct vertices $u_i,v_i\in V(\Gamma_i)$.
If $u_1v_1\in E(\Gamma_1)$, then we define $\Gamma=\Gamma_1(u_1v_1)\oplus \Gamma_2(u_2,v_2)$,
called the {\bf $2$-sum} of $\Gamma_1$ and $\Gamma_2$, as the graph obtained from $\Gamma_1$ and $\Gamma_2$
by removing the edge $u_1v_1$ in $\Gamma_1$, and then identifying $u_1$ and $u_2$ as a new vertex
$u$, and identifying $v_1$ and $v_2$ as a new vertex $v$ (see Figure \ref{FIG:2sum}).

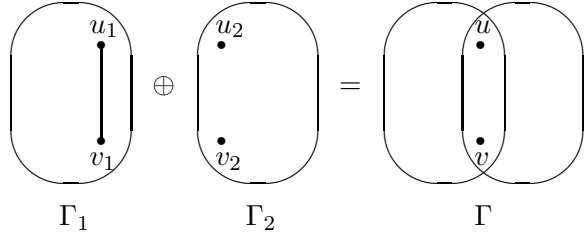
\begin{figure}[ht]
\setlength{\unitlength}{0.08cm}

\begin{center}

\begin{picture}(125,40)

\put(20, 20){\oval(20, 30)}

\put(51, 20){\oval(20, 30)}

\put(18,-2){$\Gamma_1$}\put(49,-2){$\Gamma_2$}

\put(25,28){\circle*{1.2}}
\put (23, 30){$u_1$}

\put(25,12){\circle*{1.2}}
\put(23,8){$v_1$}

\qbezier(25,12)(25,20)(25,28)

\put(45,28){\circle*{1.2}}
\put (44, 30){$u_2$}

\put(45,12){\circle*{1.2}}
\put(44,8){$v_2$}
\put(33.5,20){$\oplus$}

\put(82, 20){\oval(20, 30)}

\put(95, 20){\oval(20, 30)}

\put(88,28){\circle*{1.2}}
\put (87, 30){$u$}

\put(88,12){\circle*{1.2}}
\put(87,8){$v$}
\put (87, -2){$\Gamma$}

\put(64.5,20){$=$}
\end{picture}
\end{center}
\caption{\small\it The $2$-sum $\Gamma=\Gamma_1(u_1v_1)\oplus \Gamma_2(u_2,v_2)$.}
\label{FIG:2sum}
\end{figure}

This $2$-sum operation can be viewed as a dual operation of Haj\'{o}s join on graph coloring.
It was first developed by Kochol \cite{Kochol2001} in studying $3$-flow problem,
and later generalized to $\Z_3$-connectivity in \cite{HanLL18}.
Here we extend this $2$-sum property to group connectivity of arbitrary finite Abelian groups.
\begin{lemma}\label{2sum}
Let $S$ be a finite Abelian group with $|S|\ge 3$. If neither $\Gamma_1$ nor $\Gamma_2$ is $S$-connected,
then $\Gamma=\Gamma_1\oplus \Gamma_2$ is not $S$-connected.
\end{lemma}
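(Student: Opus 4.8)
The plan is to prove the statement directly by producing a single zero-sum boundary function on $\Gamma$ that admits no nowhere-zero realization. Since neither factor is $S$-connected, fix $\gamma_1\in\ZA(\Gamma_1,S)$ and $\gamma_2\in\ZA(\Gamma_2,S)$ for which no nowhere-zero map achieves the prescribed boundary in $\Gamma_1$, respectively in $\Gamma_2$. I then define $\gamma$ on $V(\Gamma)=\big(V(\Gamma_1)\setminus\{u_1,v_1\}\big)\cup\big(V(\Gamma_2)\setminus\{u_2,v_2\}\big)\cup\{u,v\}$ by letting $\gamma$ equal $\gamma_1$ on the internal vertices of $\Gamma_1$, equal $\gamma_2$ on the internal vertices of $\Gamma_2$, and setting $\gamma(u)=\gamma_1(u_1)+\gamma_2(u_2)$ and $\gamma(v)=\gamma_1(v_1)+\gamma_2(v_2)$. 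Summing over $V(\Gamma)$ collapses to $\sum_{V(\Gamma_1)}\gamma_1+\sum_{V(\Gamma_2)}\gamma_2=0$, so $\gamma\in\ZA(\Gamma,S)$ is a legitimate target.

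The heart of the argument is a rigidity property of the punctured factor $\Gamma_1-u_1v_1$. I claim that every nowhere-zero map $\varphi'$ on $\Gamma_1-u_1v_1$ whose boundary agrees with $\gamma_1$ at all internal vertices of $\Gamma_1$ must also satisfy $\partial\varphi'(u_1)=\gamma_1(u_1)$ (and hence $\partial\varphi'(v_1)=\gamma_1(v_1)$, by the zero-sum constraint). To prove this, orient $u_1v_1$ from $u_1$ to $v_1$, reinsert it, and give it an arbitrary value $a\in S\setminus\{0\}$; the result is a nowhere-zero map on all of $\Gamma_1$ whose boundary agrees with $\gamma_1$ internally and whose value at $u_1$ is $\partial\varphi'(u_1)+a$. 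Since $\gamma_1$ is unrealizable, this boundary cannot equal $\gamma_1$, so $\partial\varphi'(u_1)+a\neq\gamma_1(u_1)$ for every nonzero $a$. As $a$ runs through $S\setminus\{0\}$ the element $\partial\varphi'(u_1)+a$ runs through all of $S$ except $\partial\varphi'(u_1)$ itself, so the inequality can hold for all such $a$ only when $\gamma_1(u_1)=\partial\varphi'(u_1)$.

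To finish, suppose for contradiction that some nowhere-zero $\varphi$ realizes $\gamma$ on $\Gamma$, and split it into its restrictions $\varphi'$ to $E(\Gamma_1-u_1v_1)$ and $\varphi''$ to $E(\Gamma_2)$; both are nowhere-zero. The internal vertices of each factor keep exactly their incident edges in $\Gamma$, so $\varphi'$ matches $\gamma_1$ internally and $\varphi''$ matches $\gamma_2$ internally. The rigidity property then forces $\partial\varphi'(u_1)=\gamma_1(u_1)$. Reading the boundary of $\varphi$ at the merged vertex $u$ gives $\partial\varphi'(u_1)+\partial\varphi''(u_2)=\gamma(u)=\gamma_1(u_1)+\gamma_2(u_2)$, whence $\partial\varphi''(u_2)=\gamma_2(u_2)$; the zero-sum condition upgrades this to $\partial\varphi''=\gamma_2$ on all of $V(\Gamma_2)$. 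Thus $\varphi''$ is a nowhere-zero realization of $\gamma_2$ in $\Gamma_2$, contradicting the choice of $\gamma_2$. Hence $\Gamma$ fails to realize $\gamma$ and is not $S$-connected.

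The step I expect to be the main obstacle is the rigidity property, since it is the only place that converts the hypothesis on $\Gamma_1$ (a statement about the graph with $u_1v_1$ present) into control over the punctured graph $\Gamma_1-u_1v_1$ that actually sits inside the $2$-sum; it is precisely here that the freedom to assign the removed edge any nonzero value is exploited, and the standing hypothesis $|S|\ge 3$ comfortably supplies the nonzero group elements this requires. The remaining bookkeeping — checking $\gamma\in\ZA(\Gamma,S)$, that the two restrictions stay nowhere-zero, and that matching boundaries at internal vertices together with the single coupling equation at $u$ pin down $\varphi''$ — is routine, and the asymmetry between the two factors (only $\Gamma_1$ needs the distinguished edge) mirrors the asymmetric definition of the $2$-sum.
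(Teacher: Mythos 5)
Your proof is correct and is essentially the paper's own argument: you build the same combined boundary $\gamma(u)=\gamma_1(u_1)+\gamma_2(u_2)$, $\gamma(v)=\gamma_1(v_1)+\gamma_2(v_2)$, and the same trick of letting the deleted edge $u_1v_1$ absorb a nonzero discrepancy at the identified vertex. The only difference is organizational: the paper extracts a \emph{nonzero} discrepancy $b$ from the unrealizability of $\gamma_2$ in $\Gamma_2$ and lands the contradiction in $\Gamma_1$ (reinserting $u_1v_1$ with value $b$), whereas your ``rigidity'' step runs the same reinsertion computation in the contrapositive direction to force \emph{zero} discrepancy at $u_1$ and lands the contradiction in $\Gamma_2$ --- a mirror image of the same proof, equally valid.
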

\begin{proof}
Let $u, v \in V(\Gamma)$ and $u_i, v_i \in V(\Gamma_i)$ where $i=1,2$ as defined above.
That is, $\Gamma=\Gamma_1(u_1v_1)\oplus \Gamma_2(u_2,v_2)$. Since $\Gamma_i$ is not $S$-connected
for each $i\in \{1,2\}$, there exists a $\beta_i\in \ZA(\Gamma_i,S)$ such that for any orientation
of $\Gamma_i$ and any mapping $\varphi_i: E(\Gamma_i)\mapsto S\setminus\{0\}$,
we have $\partial\varphi_i\neq\beta_i$.

For each $z\in V(\Gamma)$, define
$$\varepsilon(z)=
\left\{\begin{array}{ll}
\beta_1(u_1)+\beta_2(u_2) & \mbox{if~} z=u; \\
\beta_1(v_1)+\beta_2(v_2) & \mbox{if~} z=v;\\
\beta_1(z) & \mbox{if~} z\in V(\Gamma_1)\setminus\{u_1,v_1\};\\
\beta_2(z) & \mbox{otherwise}.\\
\end{array}
\right.
$$
It is routine to check that $\sum_{z\in V(\Gamma)}\varepsilon(z)=\sum_{x\in V(\Gamma_1)}\beta_1(x)+\sum_{y\in V(\Gamma_2)}\beta_2(y)=0$, and so $\varepsilon\in\ZA(\Gamma,S)$.

Suppose, on the contrary, that $\Gamma$ is $S$-connected. Fix an orientation $D$ of $\Gamma$.
Then there exists a mapping $\eta: E(\Gamma)\mapsto S\setminus\{0\}$ such that $\partial \eta=\varepsilon$. In particular, we have
\[
\sum\limits_{e\in E^+_D(u)}\eta(e)-\sum\limits_{e\in E^-_D(u)}\eta(e)=\partial \eta(u)=\varepsilon(u)
 \]
 and
 \[
 \sum\limits_{e\in E^+_D(v)}\eta(e)-\sum\limits_{e\in E^-_D(v)}\eta(e)=\partial\eta(v)=\varepsilon(v).
\]

Let $D_2$ be the restriction of $D$ in $\Gamma_2$. Consider $D_2$ and $\eta$ on $\Gamma_2$.
As $\partial\eta(z)=\beta_2(z), \forall z\in V(\Gamma_2)\setminus\{u_2,v_2\}$, we have
\begin{eqnarray*}
  \partial\eta(u_2)+\partial\eta(v_2) &=& 0-\sum\limits_{z\in V(\Gamma_2)\setminus\{u_2,v_2\}}\partial\eta(z)\\
  &=& 0-\sum\limits_{z\in V(\Gamma_2)\setminus\{u_2,v_2\}}\beta_2(z)\\
  &=& \beta_2(u_2)+\beta_2(v_2).
\end{eqnarray*}
Since $\partial\varphi\neq\beta_2$ for any mapping $\varphi: E(\Gamma_2)\mapsto S\setminus\{0\}$,
it follows that $\partial\eta\neq\beta_2$, and so
$\partial\eta(u_2)\neq\beta_2(u_2)$ from the above equation. Thus there exists a nonzero
element $b\in S$ such that $\partial\eta(u_2)= \beta_2(u_2)+b$ and $\partial\eta(v_2)= \beta_2(v_2)-b$ in $\Gamma_2$.

Now consider $\eta$ and $D_1$, the restriction of $D$ on $\Gamma_1-u_1v_1$.
We have
$$\partial\eta(u_1)=\varepsilon(u)-[\beta_2(u_2)+b]=[\beta_1(u_1)+\beta_2(u_2)]-[\beta_2(u_2)+b]=\beta_1(u_1)-b$$
and
$$\partial\eta(v_1)=\varepsilon(v)-[\beta_2(v_2)-b]=\beta_1(v_1)+b.$$

We orient the edge $u_1v_1$ from $u_1$ to $v_1$ in $\Gamma_1$. Together with $D_1$, this gives an orientation $D_1'$ of $\Gamma_1$.
Define a mapping $\omega: E(\Gamma_1)\mapsto S\setminus\{0\}$ such that, for every  $e\in E(\Gamma_1)$,
$$\omega(e)=
\left\{\begin{array}{ll}
b & \mbox{if~} e=u_1v_1; \\
\eta(e) & \mbox{otherwise.}
\end{array}
\right.
$$
Then $\partial \omega(z)=\partial\eta(z)=\beta_1(z),\ \forall z\in V(\Gamma_1)\setminus\{u_1,v_1\}$.
Moreover, $\partial \omega(u_1)=\partial\eta(u_1)+\omega(u_1v_1)=\beta_1(u_1)$ and
$\partial \omega(v_1)=\partial\eta(v_1)-\omega(u_1v_1)=\beta_1(v_1)$.
Conclude that $\partial \omega=\beta_1$, which is a contradiction.
\end{proof}

For $X\subseteq E(G)$, the {\bf contraction} $G/X$ is the graph
obtained by identifying the two ends of each edge in $X$
and then deleting the resulting loops from $G$. If $H$ is a subgraph of $G$,
$G/H$ is used to represent $G/E(H)$ for short.
For proving $S$-connectivity, the following lemma would be helpful.

\begin{lemma}\cite{LaiH00}\label{con}
(1) \ A cycle $C_n$ of length $n$ is $S$-connected if and only if
$|S| \ge n+1$.\\
(2) \ If $H$ is an $S$-connected subgraph of a graph $G$, then $G$ is $S$-connected if and only if $G/H$ is $S$-connected.
\end{lemma}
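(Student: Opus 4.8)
The plan is to handle the two parts separately, each by directly manipulating boundary functions, with no appeal to any structure beyond the definition of $S$-connectivity.

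For part (1) I would fix the natural cyclic orientation of $C_n$, writing the vertices as $v_1,\dots,v_n$ and the arcs as $e_i$ directed from $v_i$ to $v_{i+1}$ (indices mod $n$). Given any $\gamma\in\ZA(C_n,S)$, the requirement $\partial\varphi=\gamma$ becomes the recurrence $\varphi(e_i)=\varphi(e_{i-1})+\gamma(v_i)$, so once a single value $\varphi(e_1)=a$ is chosen the remaining arc values are forced: $\varphi(e_i)=a+s_i$, where $s_i=\sum_{j=2}^{i}\gamma(v_j)$ and $s_1=0$. The zero-sum hypothesis $\sum_i\gamma(v_i)=0$ is exactly what makes the recurrence close up consistently around the cycle. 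A nowhere-zero realization then exists iff one can pick $a$ with $a\neq -s_i$ for every $i$, i.e. $a$ avoiding the at most $n$ forbidden values $-s_1,\dots,-s_n$. If $|S|\ge n+1$ such an $a$ always exists, giving one direction. For the converse, when $|S|\le n$ I would construct a bad $\gamma$ by choosing the partial sums $s_1,\dots,s_n$ (with $s_1=0$) so that $\{s_1,\dots,s_n\}=S$, which is possible precisely because $n\ge|S|$; setting $\gamma(v_i)=s_i-s_{i-1}$ for $i\ge 2$ and $\gamma(v_1)=-s_n$ then yields a zero-sum function whose forbidden set is all of $S$, so no nowhere-zero $\varphi$ exists.

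For part (2) fix an orientation $D$ of $G$ inducing orientations on $H$ and on $G/H$, and note that $E(G/H)$ is exactly the set of edges of $G$ with at least one endpoint outside $V(H)$, while $v_H$ denotes the contracted vertex. For the forward direction, given $\gamma'\in\ZA(G/H,S)$ I would lift it to $\gamma\in\ZA(G,S)$ by keeping $\gamma(v)=\gamma'(v)$ off $H$ and distributing the single value $\gamma'(v_H)$ over $V(H)$ (e.g. placing it all on one vertex of $H$ and $0$ elsewhere); $S$-connectivity of $G$ produces a nowhere-zero $\varphi$ with $\partial\varphi=\gamma$, and restricting $\varphi$ to $E(G/H)$ realizes $\gamma'$, since the internal arcs of $H$ cancel in the summed boundary, giving $\sum_{v\in V(H)}\partial\varphi(v)=\gamma'(v_H)$ at $v_H$. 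The reverse direction is where $S$-connectivity of $H$ is used: given $\gamma\in\ZA(G,S)$, push it forward to $\gamma'\in\ZA(G/H,S)$ via $\gamma'(v_H)=\sum_{v\in V(H)}\gamma(v)$, obtain a nowhere-zero $\varphi'$ on $E(G/H)$ from $S$-connectivity of $G/H$, and then define the residual boundary $\beta(v)=\gamma(v)-\big(\partial\varphi'\text{-contribution of the external arcs at }v\big)$ for $v\in V(H)$. The crucial check is that $\beta\in\ZA(H,S)$: summing over $V(H)$, the external contribution totals $\partial\varphi'(v_H)=\gamma'(v_H)=\sum_{v\in V(H)}\gamma(v)$, so $\sum_{v\in V(H)}\beta(v)=0$. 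Then $S$-connectivity of $H$ supplies a nowhere-zero $\varphi_H$ with $\partial\varphi_H=\beta$, and gluing $\varphi'$ and $\varphi_H$ gives a nowhere-zero realization of $\gamma$ on $G$.

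I expect the main obstacle to be the bookkeeping in part (2): correctly separating the boundary contributions of the internal $H$-arcs from those of the external arcs and verifying that the residual demand $\beta$ is genuinely zero-sum, so that $S$-connectivity of $H$ applies. The cancellation of internal arcs in $\sum_{v\in V(H)}\partial\varphi(v)$ and the matching identity $\sum_{v\in V(H)}\gamma(v)=\partial\varphi'(v_H)$ are the two identities that must be pinned down precisely; once they are in hand, both directions assemble routinely.
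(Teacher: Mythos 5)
The paper does not actually prove Lemma \ref{con}: it is quoted from \cite{LaiH00}, so your argument can only be compared with the standard proof in the literature, which it essentially reproduces. Your part (1) is complete and correct: the recurrence $\varphi(e_i)=\varphi(e_{i-1})+\gamma(v_i)$, the observation that the zero-sum condition on $\gamma$ is exactly what closes the recurrence around the cycle, the count of at most $n$ forbidden values for the free parameter $a$, and the converse construction choosing partial sums $s_1,\dots,s_n$ with $\{s_1,\dots,s_n\}=S$ (possible precisely when $n\ge |S|$, and legitimate even though some $\gamma(v_i)$ may vanish, since only $\varphi$ must be nowhere-zero) are all in order.

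Part (2) is also the standard argument, and the two identities you single out (cancellation of internal arcs in the summed boundary, and $\sum_{v\in V(H)}\gamma(v)=\partial\varphi'(v_H)$) are indeed the crux; your verifications of both are correct, and it is worth noting that your forward direction never uses $S$-connectivity of $H$, only its connectedness. There is, however, one genuine though easily repaired gap in the reverse direction. Under the paper's convention for contraction (Section \ref{sec:2sum}: identify the ends of each edge of $E(H)$, then delete the resulting loops), an edge of $G$ with both endpoints in $V(H)$ but not belonging to $E(H)$ --- a ``chord'' of $H$, present whenever $H$ is not an induced subgraph --- lies in neither $E(G/H)$ nor $E(H)$. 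Your glued map therefore assigns no value to such edges, so it is not a realization of $\gamma$ on all of $E(G)$, and your residual boundary $\beta$ ignores their contribution. The fix is routine: first assign every such chord an arbitrary nonzero value $c_e$, subtract the resulting boundary contribution from $\beta$ (this preserves $\sum_{v\in V(H)}\beta(v)=0$, because each chord contributes $+c_e$ at its tail and $-c_e$ at its head, both inside $V(H)$), and only then invoke the $S$-connectivity of $H$ on $E(H)$. A final small point: your description of $E(G/H)$ and of the single contracted vertex $v_H$ presupposes that $H$ is connected; this is harmless, since an $S$-connected graph on at least two vertices must be connected (a zero-sum boundary that is nonzero on one component and compensated on another is not realizable), but it deserves a sentence.
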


A vertex of degree $k$ is called a $k$-vertex.
Let $C_4$ be a $4$-cycle with $V(C_4)=\{v_4,v_3,v_2,v_1\}$.
Fix $i\in\{1,2\}$. In Figure \ref{fig2connected}, observe that there are exactly three $2$-vertices,
denoted by $x_i,y_i,z_i$ in $H_i$. Attach two copies of $H_i$, namely $H_i$ and $H_i'$
(whose corresponding $2$-vertices are $x_i',y_i',z_i'$). Let $H_{i}^1$ be the graph obtained from
$C_4$ and $H_i$  by the $2$-sum operation on $v_1v_2$ and $x_i, y_i$, namely $H_{i}^1=C_4(v_1v_2)\oplus H_i(x_i,y_i)$.
Construct a graph $H_{i}^2$ from $H_{i}^1$ and $H_i'$ by the $2$-sum operation on $v_4v_3$ and $x_i', y_i'$,
that is, $H_{i}^2=H_i^1(v_4v_3)\oplus H_i'(x_i',y_i')$. See Figure \ref{FIG: H1_2} for the construction of $H_1^2$.

\begin{figure}[ht]
\setlength{\unitlength}{0.08cm}

\begin{center}

\begin{picture}(120,35)

\put(20, 20){\oval(10, 25)}

\put(110, 20){\oval(10, 25)}

\put(16,18){$H_1$}\put(108,18){$H_1'$}

\qbezier(23,22)(26.5,21)(30,20)
\qbezier(23,18)(26.5,19)(30,20)
\qbezier(23,28)(26.5,27)(30,26)
\qbezier(23,24)(26.5,25)(30,26)
\qbezier(23,16)(26.5,15)(30,14)
\qbezier(23,12)(26.5,13)(30,14)

\put(30,20){\circle*{1.0}}
\put(30,26){\circle*{1.0}}
\put(30,14){\circle*{1.0}}

\put(100,20){\circle*{1.0}}
\put(100,26){\circle*{1.0}}
\put(100,14){\circle*{1.0}}

\put(60,25){\circle*{1.0}}
\put(60,15){\circle*{1.0}}
\put(70,15){\circle*{1.0}}
\put(70,25){\circle*{1.0}}

\qbezier(60,25)(65,25)(70,25)
\qbezier(60,25)(60,20)(60,15)
\qbezier(60,15)(65,15)(70,15)
\qbezier(70,15)(70,20)(70,25)

\put (43, 20){$\oplus$}
\put (85, 20){$\oplus$}

\qbezier(107,22)(103.5,21)(100,20)
\qbezier(107,18)(103.5,19)(100,20)
\qbezier(107,28)(103.5,27)(100,26)
\qbezier(107,24)(103.5,25)(100,26)
\qbezier(107,16)(103.5,15)(100,14)
\qbezier(107,12)(103.5,13)(100,14)

\put (95, 25){$x_1'$}
\put (95, 19){$y_1'$}
\put (95, 13){$z_1'$}
\put (31, 25){$x_1$}
\put (31, 19){$y_1$}
\put (31, 13){$z_1$}
\put (55, 24){$v_1$}
\put (55, 15){$v_2$}
\put (71, 24){$v_4$}
\put (71, 15){$v_3$}

\end{picture}
\end{center}
\vspace{-0.6cm}
\caption{\small\it The graph $H_1^2$ in Lemma \ref{h12}.}
\label{FIG: H1_2}
\end{figure}
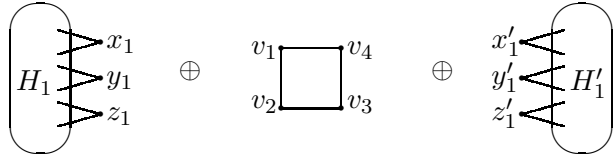

\begin{lemma}\label{h12}
(1) \ The graph $H_{1}^2$ is $\Z_2^2$-connected, but not $\Z_4$-connected.\\
(2) \ The graph $H_{2}^2$ is $\Z_4$-connected, but not $\Z_2^2$-connected.
\end{lemma}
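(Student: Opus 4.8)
The plan is to prove both statements by the same two-step scheme, exploiting the symmetry between $\Z_4$ and $\Z_2^2$: in each part one group witnesses non-connectivity via the $2$-sum Lemma \ref{2sum}, while the other witnesses connectivity via the contraction Lemma \ref{con}. I would write the argument for part (1) and note that part (2) is identical after interchanging the roles of $\Z_4$ and $\Z_2^2$ (and of $H_1$ with $H_2$), since the two auxiliary facts I need below hold for both groups of order $4$.

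For the negative direction of (1), that $H_1^2$ is not $\Z_4$-connected, I would apply Lemma \ref{2sum} twice. First note that by Lemma \ref{con}(1) the cycle $C_4$ is $S$-connected only when $|S|\ge 5$, so $C_4$ is not $\Z_4$-connected; and $H_1$ is not $\Z_4$-connected by Theorem \ref{2edgecon}(1). Since $|\Z_4|=4\ge 3$, Lemma \ref{2sum} applied to $H_1^1=C_4(v_1v_2)\oplus H_1(x_1,y_1)$ gives that $H_1^1$ is not $\Z_4$-connected. As $H_1'\cong H_1$ is likewise not $\Z_4$-connected, a second application of Lemma \ref{2sum} to $H_1^2=H_1^1(v_4v_3)\oplus H_1'(x_1',y_1')$ shows $H_1^2$ is not $\Z_4$-connected.

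For the positive direction, that $H_1^2$ is $\Z_2^2$-connected, I would peel off the two copies of $H_1$ using Lemma \ref{con}(2). By Theorem \ref{2edgecon}(1), $H_1$ is $\Z_2^2$-connected, and from the construction $H_1$ is a subgraph of $H_1^2$; hence $H_1^2$ is $\Z_2^2$-connected if and only if $H_1^2/H_1$ is. Contracting $H_1$ to a single vertex $w$ leaves $H_1'$ intact together with the two surviving cross edges $x_1x_1'$ and $y_1y_1'$ (the two $C_4$-edges not deleted in the $2$-sums), now joining $w$ to $x_1'$ and to $y_1'$; thus $H_1'$ is still a $\Z_2^2$-connected subgraph, and Lemma \ref{con}(2) reduces the question to whether $(H_1^2/H_1)/H_1'$ is $\Z_2^2$-connected. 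Contracting $H_1'$ to a vertex $w'$ turns those two edges into two parallel edges between $w$ and $w'$, that is, the $2$-cycle $C_2$. By Lemma \ref{con}(1), $C_2$ is $S$-connected whenever $|S|\ge 3$, so $C_2$ is $\Z_2^2$-connected, and tracing the equivalences backward yields that $H_1^2$ is $\Z_2^2$-connected.

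The routine but essential bookkeeping, and the step I would double-check most carefully, is the iterated contraction $H_1^2/H_1/H_1'$: one must confirm that after contracting the first copy $H_1$ the second copy $H_1'$ survives unscathed as a subgraph (so that Lemma \ref{con}(2) genuinely applies), and that exactly the two cross edges $x_1x_1'$ and $y_1y_1'$ remain, collapsing to the two parallel edges of $C_2$. The only genuine design constraint is the choice of $C_4$ for the middle gadget: its length is selected so that $C_4$ fails to be $S$-connected for every group of order $4$ (as $4<4+1$), which is precisely what makes the $2$-sum argument deliver non-connectivity; a shorter cycle such as $C_3$ would already be $S$-connected for $|S|\ge 4$ and would break the negative direction. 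Part (2) then follows verbatim with $\Z_4$ and $\Z_2^2$ interchanged, invoking Theorem \ref{2edgecon}(2) in place of (1).
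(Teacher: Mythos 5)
Your proof is correct and follows essentially the same route as the paper: Lemma \ref{con} applied twice (contracting $H_1$ and $H_1'$ down to $C_2$) for the positive direction, and Lemma \ref{2sum} applied twice (using that $C_4$ and $H_1$ both fail to be $S$-connected for $|S|=4$) for the negative direction, with part (2) obtained by swapping the roles of $\Z_4$ and $\Z_2^2$. In fact your write-up is cleaner than the printed proof, which contains a typographical slip (it says ``non-$\Z_2^2$-connected'' and concludes ``not $\Z_2^2$-connected'' in the negative direction of (1), where $\Z_4$ is clearly intended), and your explicit check that $H_1'$ survives the contraction of $H_1$ with exactly the two cross edges $x_1x_1'$, $y_1y_1'$ remaining is exactly the bookkeeping the paper leaves implicit.
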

\begin{proof}
(1) \ By Theorem \ref{2edgecon}, $H_1$ and $H_1'$ are $\Z_2^2$-connected. Notice that
$$(H_{1}^2/H_1)/H_1'=(C_4/v_1v_2)/v_3v_4=C_2,$$
which is $\Z_2^2$-connected. By Lemma \ref{con} we see that $H_{1}^2/H_1$ is $\Z_2^2$-connected.
As $H_1$ is $\Z_2^2$-connected and by Lemma \ref{con} again, $H_1^2$ is $\Z_2^2$-connected as desired.
Since $H_1^1=C_4(v_1v_2)\oplus H_1(x_1,y_1)$ is obtained from the $2$-sum of two non-$\Z_2^2$-connected graphs $C_4$ and $H_1$,
we know that $H_1^1$ is not $\Z_2^2$-connected by Lemma \ref{2sum}.
Similarly, as $H_{1}^2=H_1^1(v_4v_3)\oplus H_1'(x_1',y_1')$, where neither $H_1^1$ nor $H_1'$ is $\Z_2^2$-connected,
it follows from Lemma \ref{2sum} that $H_1^2$ is not $\Z_2^2$-connected either.

(2) \ The proof is very similar to (1). Since $H_{2}$ is $\Z_4$-connected, but not $\Z_2^2$-connected,
after applying the $2$-sum operation twice, the resulting graph $H_2^2$ is $\Z_4$-connected by Lemma \ref{con},
but not $\Z_2^2$-connected by Lemma \ref{2sum}.
\end{proof}

\begin{figure}[ht]\label{FIG: 2sumH13}
\setlength{\unitlength}{0.1cm}

\begin{center}

\begin{picture}(140,30)

\put(35, 20){\oval(10, 15)}

\put(95, 20){\oval(10, 15)}

\put(32, 20){$H_{1}^2$}

\put(93, 20){$H_{1}^2$}
\put(63, -4){$H_{1}^2$}

\put(95, 20){\oval(10, 15)}
\put(65, -3){\oval(15, 10)}

\qbezier(60,25)(65,25)(70,25)
\qbezier(60,25)(60,20)(60,15)
\qbezier(60,15)(65,15)(70,15)
\qbezier(70,15)(70,20)(70,25)

\put(70,25){\circle*{1.0}}\put(71,24){$v_4$}
\put(60,25){\circle*{1.0}}\put(55,24){$v_1$}
\put(70,15){\circle*{1.0}}\put(71,15){$v_3$}
\put(60,15){\circle*{1.0}}\put(55,15){$v_2$}

\put(45,24){\circle*{1.0}}\put(44,26){$z_1$}
\put(45,16){\circle*{1.0}}\put(44,13){$z_1'$}

\put (50, 19){$\oplus$}
\put (75, 19){$\oplus$}
\put (63.5,10){$\oplus$}

\put(61,7){\circle*{1.0}}\put(57,7){$z_2$}
\put(69,7){\circle*{1.0}}\put(70,7){$z_2'$}

\qbezier(45,24)(41.5,23)(38,22)
\qbezier(45,24)(41.5,25)(38,26)
\qbezier(45,16)(41.5,17)(38,18)
\qbezier(45,16)(41.5,15)(38,14)

\qbezier(85,24)(88.5,23)(92,22)
\qbezier(85,24)(88.5,25)(92,26)
\qbezier(85,16)(88.5,17)(92,18)
\qbezier(85,16)(88.5,15)(92,14)

\put(85,24){\circle*{1.0}}\put(84,26){$z_3$}
\put(85,16){\circle*{1.0}}\put(84,13){$z_3'$}

\qbezier(63,0)(62,3.5)(61,7)
\qbezier(59,0)(60,3.5)(61,7)
\qbezier(67,0)(68,3.5)(69,7)
\qbezier(71,0)(70,3.5)(69,7)

\end{picture}
\end{center}
\vspace{0.5cm}
\caption{\small\it $H_1^3:$ Graph of Theorem \ref{THM: H13H23} (1).}
\label{FIG: 2sumH13}
\end{figure}
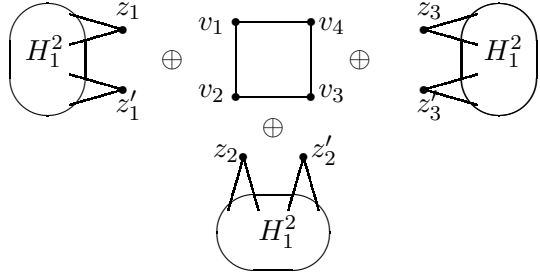

Note that, by the construction above, the graph $H_{i}^2$, for each $i\in\{1,2\}$, has precisely two vertices $z_i$ and $z_i'$ of degree two.
Now we would construct $H_i^3=C_4\oplus H_i^2\oplus H_i^2\oplus H_i^2$, $i\in\{1,2\}$, that would be used in the following theorem.
The way to construct $H_2^3$ from $H_2^2$ is the same as constructing $H_1^3$ from  $H_1^2$. So we take $H_1^3$ as an example.
Attach three copies of $H_1^2$, whose $2$-vertices are denoted by $z_1,z_1'$, $z_2,z_2'$ and $z_3,z_3'$, respectively.
Apply the $2$-sum operation three times on $C_4$ and the copies of $H_1^2$. Specifically, we first apply $2$-sum on the edge $v_1v_2$
with $z_1,z_1'$ in the first copy of $H_1^2$, then apply $2$-sum on the edge $v_2v_3$ with $z_2,z_2'$ in the second copy,
and apply the last $2$-sum on the edge $v_3v_4$ with $z_3,z_3'$ in the third copy, as demonstrated in Figure \ref{FIG: 2sumH13}.
This gives  the resulting graph $H_1^3$.

\begin{theorem}\label{THM: H13H23}
(1) \ The graph $H_{1}^3$ is $3$-edge-connected, $\Z_2^2$-connected, but not $\Z_4$-connected.\\
(2) \ The graph $H_{2}^3$ is $3$-edge-connected, $\Z_4$-connected, but not $\Z_2^2$-connected.
\end{theorem}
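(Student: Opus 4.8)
The plan is to verify the three asserted properties of $H_1^3$ one at a time, the argument for $H_2^3$ being word-for-word identical with the roles of $\Z_4$ and $\Z_2^2$ interchanged (using Lemma \ref{h12}(2) in place of \ref{h12}(1)). Two of the three properties fall out formally from the $2$-sum machinery of Lemmas \ref{con} and \ref{2sum}, and only $3$-edge-connectivity requires a genuine graph-theoretic argument.

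For $\Z_2^2$-connectivity I would argue by contraction, exactly as in the proof of Lemma \ref{h12}. Write $C^{(1)},C^{(2)},C^{(3)}$ for the three copies of $H_1^2$ used in the construction; each is an intact $\Z_2^2$-connected subgraph of $H_1^3$ by Lemma \ref{h12}(1). Since only the single $C_4$-edge $v_4v_1$ survives the three $2$-sums, contracting $C^{(1)},C^{(2)},C^{(3)}$ in turn successively identifies $v_1,v_2,v_3,v_4$ (and all private vertices) into one vertex and turns $v_4v_1$ into a loop; hence $H_1^3/C^{(1)}/C^{(2)}/C^{(3)}$ is a single vertex, which is trivially $\Z_2^2$-connected. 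Three applications of Lemma \ref{con}(2) then give that $H_1^3$ is $\Z_2^2$-connected.

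For the failure of $\Z_4$-connectivity I would read $H_1^3$ as the iterated $2$-sum $((C_4\oplus H_1^2)\oplus H_1^2)\oplus H_1^2$ and apply Lemma \ref{2sum} repeatedly. Here $C_4$ is not $\Z_4$-connected by Lemma \ref{con}(1) (as $|\Z_4|=4<5$), and each copy of $H_1^2$ is not $\Z_4$-connected by Lemma \ref{h12}(1); since $|\Z_4|=4\ge 3$, Lemma \ref{2sum} applies at every stage and propagates non-$\Z_4$-connectivity through all three $2$-sums, so $H_1^3$ is not $\Z_4$-connected.

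The main obstacle is $3$-edge-connectivity. First I would check the minimum degree is $3$: the six degree-$2$ vertices of the three copies are absorbed into the junctions $v_1,\dots,v_4$, whose degrees become $3,4,4,3$ ($v_2$ and $v_3$ each absorb two $z$-vertices contributing four copy-edges, while $v_1,v_4$ each absorb one $z$-vertex and pick up the backbone edge $v_4v_1$). To rule out edge-cuts of size $\le 2$ I would use the cyclic ``necklace'' structure $v_1-C^{(1)}-v_2-C^{(2)}-v_3-C^{(3)}-v_4$ closed by the backbone edge $v_4-v_1$. The decisive local fact is that every $2$-edge-cut of $H_1^2$ either isolates one of its two degree-$2$ vertices $z_1,z_1'$ or separates $z_1$ from $z_1'$. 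Granting this, a global cut of size $\le 2$ induces on each copy a cut of size $0$ or $2$ (a nonempty proper cut of the $2$-edge-connected $H_1^2$ has at least two edges), so that either some copy is split using the full budget of $2$ edges or no copy-edge is cut at all; a short case analysis shows that any splitting use of a copy's $2$-cut (whether isolating a terminal or separating the two terminals) is contradicted by the complementary connection running around the rest of the necklace, while a cut using no copy-edge can consist only of the single backbone edge, which lies on a cycle and so disconnects nothing. Hence $H_1^3$ has no cut of size $\le 2$. The one genuinely graph-specific input is the local fact about $2$-edge-cuts of $H_1^2$, which I would deduce from the property (readily checked from Figure \ref{fig2connected}) that the only $2$-edge-cuts of $H_1$ are the trivial ones at its three degree-$2$ vertices $x_1,y_1,z_1$, propagated through the two $2$-sums that define $H_1^2$.
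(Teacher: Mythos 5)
Your proposal is correct and takes essentially the same route as the paper's proof: $\Z_2^2$-connectivity by contracting the three copies of $H_1^2$ and invoking Lemma \ref{con}, non-$\Z_4$-connectivity by propagating Lemma \ref{2sum} through the three $2$-sums, and $3$-edge-connectivity from the two key facts that $H_1$ has only trivial $2$-edge-cuts and that every $2$-edge-cut of $H_1^2$ separates $z_1$ from $z_1'$, combined with the $z_kz_k'$-path running around the rest of the necklace. Your case split (some copy is split by the full budget of two edges versus no copy edge is cut at all) is merely a slightly more explicit organization of the paper's own case analysis and introduces no new idea or gap.
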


\begin{proof}
(1) \ As $H_1^2$ is $\Z_2^2$-connected and, after contracting copies of $H_1^2$ in $H_{1}^3$,
the resulting graph is a singleton which is $\Z_2^2$-connected, we conclude by Lemma \ref{con}
that $H_1^3$ is $\Z_2^2$-connected. Since $H_1^3$ is obtained from $2$-sum operation of non-$\Z_4$-connected graphs,
Lemma \ref{2sum} shows that it is not $\Z_4$-connected.

It is also very straightforward to verify that $H_1^3$ is $3$-edge-connected.
Firstly, one can easily check that $H_1$ has only three trivial $2$-edge-cuts.
Secondly, the graph $H_1^2$, obtained from $2$-sum of $C_4$ and two copies of $H_1$,
has exactly three $2$-edge-cuts, each of which separates $z_1$ and $z_1'$.
At last, we can use these facts to show that $H_1^3$ is $3$-edge-connected as follows.
Specifically, the minimal degree of $H_1^3$ is three, so we only look at nontrivial edge-cuts.
If an edge-cut separates $z_k$ and $z_k'$ for some $k\in\{1,2,3\}$ in a copy of $H_1^2$,
then it has a size at least $3$ since we need at least two edges to separate $z_k$ and $z_k'$
in the copy of $H_1^2$  and there is a $z_kz_k'$-path outside that copy.
Assume instead, an edge-cut does not separate $z_k$ and $z_k'$ for any $k\in\{1,2,3\}$.
Then either it lies in the edges incident to $V(C_4)$, or it separates a copy of $H_1^2$
(where $z_k$ and $z_k'$ are in one component). In each case, the edge-cut must have a
size at least $3$. This proves that $H_1^3$ is $3$-edge-connected.

(2) \ The proof applies the same argument as (1) and thus omitted.

Now Theorem \ref{mainth} follows from Theorem \ref{THM: H13H23} and Lemma \ref{2sum}.
\end{proof}

\section{Constructions of cubic graphs}
\label{sec:cubic}

The constructions in this section rely on some basic properties of $K_4$ and $3$-prism (see Figure \ref{Fig:3prism}),
as shown in the following lemmas.

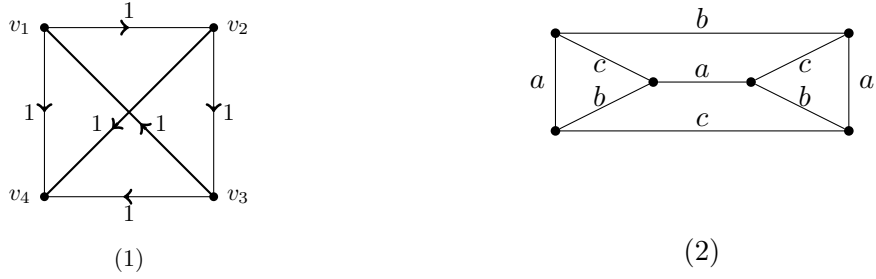
\begin{figure}

\minipage{0.4\textwidth}
\centering
\begin{tikzpicture}[scale=0.75]
\tikzstyle{every node}=[font=\small,scale=0.9]
\begin{scope}
[place/.style={thick,fill=black!100,circle,inner sep=1pt,minimum size=0.1mm,draw=black!100},
cloud/.style={thick,ellipse,draw=black!100,fill=white,inner sep=0pt,minimum size=20mm,minimum height=10mm}]

\draw (4,4) rectangle (1,1);
\node [place, label=left:$v_1$](a) at (1,4) {};
\node [place, label=right:$v_2$] at (4,4) {};
\node [place, label=right:$v_3$] at (4,1) {};
\node [place, label=left:$v_4$] at (1,1) {};
\draw[thick] (1,1) -- (4,4);\draw[<-,very thick](2.2,2.2) -- (2.3,2.3);
\coordinate [label=left:$1$] (c) at (2.2,2.3);
\draw[thick] (4,1) -- (1,4);\draw[<-,very thick](2.7,2.3) -- (2.8,2.2);
\coordinate [label=right:$1$] (c) at (2.8,2.3);
\draw[<-,very thick] (1,2.5) -- (1,2.6);
\coordinate [label=left:$1$] (a) at (1,2.5);
\draw[<-,very thick] (4,2.5) -- (4,2.6);
\coordinate [label=right:$1$] (b) at (4,2.5);
\draw[<-,very thick] (2.4,1) -- (2.5,1);
\coordinate [label=below:$1$] (c) at (2.5,1);
\draw[->,very thick] (2.4,4) -- (2.5,4);
\coordinate [label=above:$1$] (d) at (2.5,4);
\coordinate [label=above:$(1)$] (label) at (2.5,-0.5);
\end{scope}
\end{tikzpicture}
\endminipage\hfill
\minipage{0.6\textwidth}
\centering
\begin{tikzpicture}[scale=0.65]
\begin{scope}
[place/.style={thick,fill=black!100,circle,inner sep=1pt,minimum size=0.1mm,draw=black!100}]

\draw (6,2) rectangle (0,0);
\node [place] (c) at (6,0) {};
\node [place] (d) at (0,0) {};
\node [place] (a) at (6,2) {};
\node [place] (b) at (0,2) {};
\node [place] (e) at (2,1) {};
\node [place] (f) at (4,1) {};
\draw (e) to (f);\draw (e) to (d);\draw (e) to (b);\draw (a) to (f);\draw (c) to (f);
\coordinate [label=above:$b$] (label) at (3,1.9);
\coordinate [label=above:$a$] (label) at (3,0.9);
\coordinate [label=above:$c$] (label) at (3,-0.1);
\coordinate [label=right:$a$] (label) at (6,1);
\coordinate [label=left:$a$] (label) at (0,1);
\coordinate [label=above:$b$] (label) at (0.9,0.3);
\coordinate [label=above:$c$] (label) at (0.9,1);
\coordinate [label=above:$b$] (label) at (5.1,0.3);
\coordinate [label=above:$c$] (label) at (5.1,1);
\coordinate [label=above:$(2)$] (label) at (3,-3);
\end{scope}
\end{tikzpicture}
\endminipage\hfill

\caption{A $\Z_4$-flow of $K_4$ with boundary $1$ and a $3$-prism.}\label{Fig:K4}\label{Fig:3prism}
\end{figure}

\begin{lemma}\label{Prop:K_4}
Let $G$ be the complete graph $K_4$ with an orientation $D$. Define $\beta: V(G)\mapsto \{1\}$,
which is a zero-sum boundary function in $\ZA(G,\Z_4)$. Then for any mapping $\varphi:E(G)\mapsto \Z_4\setminus\{0\}$
with $\partial\varphi=\beta$, there exists a vertex $v$ of $G$ such that each edge $e=uv\in E(G)$
is either directed into $v$ with flow value $\varphi(e)=1$ or directed away from $v$ with flow value $\varphi(e)=3$.
\end{lemma}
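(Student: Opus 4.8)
The plan is to translate the statement into a purely local, sign-counting condition at each vertex and then finish with a one-line double count. First I would attach to every incidence $(e,v)$ the value $c(e,v)=\varphi(e)$ when $e\in E^+_D(v)$ and $c(e,v)=-\varphi(e)$ when $e\in E^-_D(v)$, so that $\partial\varphi(v)=\sum_{e\ni v}c(e,v)$. Since $\varphi(e)\in\Z_4\setminus\{0\}=\{1,2,3\}$, every $c(e,v)\in\{1,2,3\}$ as well, and for an edge $e=uv$ we have the skew-symmetry $c(e,u)=-c(e,v)$. The crucial remark is that the desired property at $v$ — each incident edge is directed into $v$ with value $1$ or away from $v$ with value $3$ — holds if and only if $c(e,v)=3$ (that is, $-1$) for all three edges $e\ni v$. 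Thus the goal becomes: find a vertex all of whose incident contributions equal $3$.

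Next I would enumerate the possible local pictures. Since $\deg v=3$ and $\partial\varphi(v)=\beta(v)=1$, reading the three contributions as integers in $\{1,2,3\}$ their integer sum lies in $\{3,\ldots,9\}$ and is $\equiv 1\pmod 4$, hence equals $5$ or $9$. Therefore the multiset of contributions at each vertex is exactly one of $\{3,3,3\}$, $\{1,1,3\}$, or $\{1,2,2\}$; call these types A, B, C respectively. A type-A vertex is precisely the vertex we seek, so it remains to show at least one exists.

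For the finishing step I would exploit that $-1=3$ and $-2=2$ in $\Z_4$: for any edge $e=uv$ the pair $\{c(e,u),c(e,v)\}$ is either $\{1,3\}$ or $\{2,2\}$, so no edge produces two incidences of value $1$ or two incidences of value $3$. Consequently the total number of value-$1$ incidences equals the total number of value-$3$ incidences (each $\{1,3\}$-edge contributes exactly one of each, and each $\{2,2\}$-edge contributes none). Counting instead over the four vertices, with $n_A,n_B,n_C$ the numbers of type-A, B, C vertices, the value-$1$ incidences total $2n_B+n_C$ and the value-$3$ incidences total $3n_A+n_B$; equating them gives $n_B+n_C=3n_A$, and combined with $n_A+n_B+n_C=4$ this forces $n_A=1$. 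Hence exactly one vertex is of type A, which is the vertex $v$ required.

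I expect the only genuine content to be the reformulation in the first paragraph; once the target condition at $v$ is recognized as ``$c(e,v)=-1$ for every incident edge,'' the type enumeration and the double count are routine bookkeeping. The single point needing care is the $\Z_4$ sign arithmetic ($3=-1$ and $2=-2$), since it is precisely the symmetry $c(e,u)=-c(e,v)$ that forces the value-$1$ and value-$3$ incidence totals to agree and drives the final equation $n_A=1$.
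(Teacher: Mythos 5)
Your proof is correct, and it takes a genuinely different route from the paper's. The paper first normalizes the flow so that all values lie in $\{1,2\}$ (reversing every edge of value $3$, using $3=-1$ in $\Z_4$), and then argues by contradiction with a local forcing argument: assuming no vertex has all edges directed into it with value $1$, it starts from one value-$1$ edge, say from $v_1$ to $v_2$, and shows the boundary condition together with the contradiction hypothesis forces the values and orientations to propagate around $K_4$ until $v_4$ ends up with all three edges directed into it with value $1$, a contradiction. Your argument instead is global: after the reformulation $c(e,v)=3$ for all $e\ni v$, you classify each vertex by the multiset of its signed incidence values (only $\{3,3,3\}$, $\{1,1,3\}$, $\{1,2,2\}$ are compatible with boundary $1$), use the antisymmetry $c(e,u)=-c(e,v)$ to equate the number of value-$1$ and value-$3$ incidences, and solve $n_B+n_C=3n_A$, $n_A+n_B+n_C=4$ to get $n_A=1$. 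Your double count buys two things the paper's proof does not give: it shows the distinguished vertex is \emph{unique}, and it works verbatim for any cubic graph with the all-ones boundary in $\Z_4$ (yielding exactly $n/4$ such vertices), whereas the paper's propagation is tied to the specific structure of $K_4$. The paper's argument, on the other hand, needs no counting setup and is easy to verify from the figure. Both proofs are complete; your one point needing care --- the $\Z_4$ sign arithmetic --- is handled correctly.
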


\begin{proof}
Since $3=-1($mod $4)$, for convenience  we may assign the flow value of edges in $\{1,2\}$ and adapt an appropriate
orientation from $D$. By contradiction, suppose that there exists an orientation of $G$ and a
mapping $\varphi:E(G)\mapsto \{1,2\}$ with $\partial\varphi=\beta$ such that no vertex satisfies that
all incident edges are directed into it and with flow value $1$.
Since for any $v\in V(G)$, the degree of $v$ is $3$ and $\beta(v)=1$,
there is at least one edge $e$ assigned with flow value $\varphi(e)=1$.
By symmetry, assume $\varphi(v_1v_2)=1$ and the orientation is from $v_1$ to $v_2$ as in Figure \ref{Fig:K4} (1).
Since $\beta(v_2)=1$, we must have $\varphi(v_2v_3)=\varphi(v_2v_4)=1$ and
$v_2v_3,v_2v_4$ are all directed away from $v_2$. The similar assignments are applied for $v_3v_1$ and $v_3v_4$.
At last, we need only to assign the orientation and flow value of $v_1v_4$ to satisfy $\beta=\partial \varphi$.
We shall find that all the edges incident to $v_4$ are directed into $v_4$ with flow value $1$, a contradiction.
\end{proof}

A $3$-prism is a graph obtained by adding a perfect matching between two vertex-disjoint triangles (see Figure \ref{Fig:3prism}(2)).

\begin{lemma}\label{Prop:3prism}
The $3$-prism graph is unique $3$-edge-colorable. (That is, all proper $3$-edge-colorings $\phi:E(G)\mapsto \{a,b,c\}$
are isomorphic. See Figure \ref{Fig:3prism}(2).)
\end{lemma}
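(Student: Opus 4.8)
The plan is to show that the partition of the edge set into three colour classes (equivalently, into three perfect matchings) is forced, so that any two proper $3$-edge-colourings differ only by a permutation of the colour names. Label the two triangles of the $3$-prism as $u_1u_2u_3$ and $w_1w_2w_3$, joined by the three rungs $u_iw_i$, $i\in\{1,2,3\}$. The first observation I would record is that, since the three edges of a triangle are pairwise adjacent, in any proper $3$-edge-colouring $\phi$ each triangle must receive all three colours; in particular $\phi$ restricted to each triangle is a bijection onto $\{a,b,c\}$.

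Next I would exploit the cubic structure to turn this into a forcing argument. Because the prism is $3$-regular, all three colours appear at every vertex, so the colour of the rung $u_iw_i$ is exactly the colour missing from the two triangle edges incident to $u_i$ --- that is, the colour of the edge of triangle $u_1u_2u_3$ opposite $u_i$. Writing $\phi(u_1u_2)=\alpha$, $\phi(u_2u_3)=\beta$, $\phi(u_3u_1)=\gamma$ for some permutation $(\alpha,\beta,\gamma)$ of $(a,b,c)$, this yields $\phi(u_1w_1)=\beta$, $\phi(u_2w_2)=\gamma$, $\phi(u_3w_3)=\alpha$. Applying the same ``opposite-edge'' rule at the vertices $w_i$ then forces $\phi(w_2w_3)=\beta$, $\phi(w_3w_1)=\gamma$, $\phi(w_1w_2)=\alpha$, so the colouring of the whole graph is determined by the single bijection chosen on the first triangle.

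It remains only to confirm that this forced assignment is genuinely proper --- a short inspection that the three edges meeting at each $w_i$ carry three distinct colours --- which I would dispatch by direct verification. Since the six choices of $(\alpha,\beta,\gamma)$ are precisely the six permutations of $\{a,b,c\}$, they all yield the \emph{same} partition of $E(G)$ into three perfect matchings; hence, up to renaming colours, the proper $3$-edge-colouring is unique, matching Figure \ref{Fig:3prism}(2). An equivalent route would enumerate the four perfect matchings of the prism directly and observe that the ``rung'' matching $\{u_1w_1,u_2w_2,u_3w_3\}$ cannot appear in a $3$-edge-colouring, since deleting it leaves two disjoint triangles, which are odd cycles and so are not $2$-edge-colourable; the remaining three matchings are pairwise disjoint and cover $E(G)$, again giving the unique colouring. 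The only genuine bookkeeping --- and thus the one point requiring care --- is the propagation of the forced colours through both triangles; the propriety check and the uniqueness conclusion follow immediately once that is in place.
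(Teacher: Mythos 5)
Your proof is correct, but it is worth knowing that the paper itself offers no argument at all here: its entire ``proof'' of Lemma \ref{Prop:3prism} is the sentence ``This fact is easy to observe and thus omitted.'' So there is nothing to match your approach against; what you have done is supply the verification the authors chose to skip. Both of your routes are sound. The forcing argument is airtight: properness on each triangle makes the restriction of $\phi$ to a triangle a bijection onto $\{a,b,c\}$, $3$-regularity makes each rung take the colour of the opposite triangle edge, and intersecting the two-colour constraints at the vertices $w_i$ pins down the second triangle (e.g.\ $\phi(w_1w_2)\in\{\alpha,\gamma\}\cap\{\alpha,\beta\}=\{\alpha\}$), after which the propriety check at the $w_i$ is immediate. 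Your alternative route is arguably the cleanest formulation of ``unique $3$-edge-colourability'': the prism has exactly four perfect matchings, the all-rungs matching is unusable because its complement is two disjoint odd cycles, and the remaining three matchings are pairwise disjoint and partition $E(G)$, so the colour-class partition is forced. Either argument would serve as a complete replacement for the omitted proof; the second has the advantage of explaining structurally \emph{why} uniqueness holds (only one family of perfect matchings can tile the edge set), while the first is the more elementary vertex-by-vertex propagation and is closer in spirit to how one would check the colouring displayed in Figure \ref{Fig:3prism}(2).
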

\begin{proof}
This fact is easy to observe and thus omitted.
\end{proof}

Now we shall prove Theorem \ref{mainthmcubic} with the following constructions.

\begin{theorem}\label{z22notz4in3.3}
Construct a graph $G$ by replacing every vertex of $K_4$ with a copy of $H_1$,
where every $2$-vertex in each copy is incident with an edge of $K_4$ (see Figure \ref{Fig:counterexample}).
Then the $3$-edge-connected cubic graph $G$ is $\Z_2^2$-connected, but not $\Z_4$-connected.
\end{theorem}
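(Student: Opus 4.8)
The plan is to prove the three assertions separately: that $G$ is a $3$-edge-connected cubic graph, that $G$ is $\Z_2^2$-connected, and that $G$ is not $\Z_4$-connected. The first two follow routinely from results already available; the genuine difficulty is the last, where Lemma \ref{Prop:K_4} must be combined with a carefully chosen boundary function. For the structural claim I would first note that in each copy of $H_1$ every non-$2$-vertex already has degree $3$, while each of the three $2$-vertices acquires exactly one incident $K_4$-edge and so becomes a $3$-vertex; hence $G$ is cubic. For $3$-edge-connectivity I would argue as in the proof of Theorem \ref{THM: H13H23}: since the minimum degree is $3$, only nontrivial edge-cuts need be examined. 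As the only $2$-edge-cuts of $H_1$ are the three trivial ones isolating a single $2$-vertex, any nontrivial cut acting inside one copy has size at least $3$, while any cut separating copies induces an edge-cut of the underlying $K_4$, which is $3$-edge-connected; thus every edge-cut of $G$ has size at least $3$.

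For $\Z_2^2$-connectivity I would contract the four copies of $H_1$. By Theorem \ref{2edgecon} each copy is $\Z_2^2$-connected, so repeated application of Lemma \ref{con}(2) shows that $G$ is $\Z_2^2$-connected if and only if the contraction is. Contracting each copy to a single vertex leaves precisely the six $K_4$-edges, so the contraction is $K_4$ itself. As $K_4$ is well known to be $\Z_3$-connected, it is $S$-connected for every $|S|\ge 4$ by \cite{LLL2017}, in particular $\Z_2^2$-connected; therefore $G$ is $\Z_2^2$-connected.

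The main work is to show that $G$ is not $\Z_4$-connected. Since $H_1$ is not $\Z_4$-connected, fix a boundary $\beta_{H_1}\in\ZA(H_1,\Z_4)$ that is not realizable by any nowhere-zero $\Z_4$-flow under any orientation. Define $\beta\in\ZA(G,\Z_4)$ copy by copy: set $\beta=\beta_{H_1}-1$ at each of the three $2$-vertices and $\beta=\beta_{H_1}$ at every other vertex of that copy. The key arithmetic is that the values over each copy then sum to $\sum_w\beta_{H_1}(w)-3=-3=1\pmod 4$, so $\beta\in\ZA(G,\Z_4)$ and the contracted boundary at each vertex of $K_4$ equals $1$. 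Now suppose for contradiction that $\varphi\colon E(G)\mapsto\Z_4\setminus\{0\}$ satisfies $\partial\varphi=\beta$. Contracting the copies turns $\varphi$ into a nowhere-zero $\Z_4$-flow on $K_4$ whose boundary is identically $1$, so Lemma \ref{Prop:K_4} yields a copy $H^*$ at which every incident $K_4$-edge is either directed inward with value $1$ or outward with value $3$. Because $3=-1\pmod 4$, in either case such an edge contributes $-1$ to the boundary of its incident $2$-vertex; hence the boundary induced by $\varphi$ on $H^*$ through its internal edges alone takes the value $\beta_{H_1}$ at every vertex, the added $+1$ at the three $2$-vertices exactly cancelling the shift built into $\beta$. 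Thus the restriction of $\varphi$ to $H^*$ is a nowhere-zero $\Z_4$-flow realizing $\beta_{H_1}$, contradicting the choice of $\beta_{H_1}$.

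The step I expect to be the main obstacle, and the crux of the whole argument, is this last reduction: the boundary $\beta$ must be designed so that simultaneously the contracted boundary on $K_4$ is the uniform function $1$ (so that Lemma \ref{Prop:K_4} applies) and the boundary induced on the distinguished copy is exactly the unrealizable boundary $\beta_{H_1}$. The uniform shift by $-1$ at the three $2$-vertices achieves both at once, and the congruence $3\equiv-1\pmod 4$ is precisely what makes the two alternatives in Lemma \ref{Prop:K_4} produce the same boundary contribution. Notably, this works without any explicit knowledge of $\beta_{H_1}$, which is what lets a computer-free argument leverage the computer-aided Theorem \ref{2edgecon}.
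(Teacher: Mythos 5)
Your main argument---that $G$ is not $\Z_4$-connected---is correct and is essentially identical to the paper's own proof: the same shifted boundary ($\beta_1$ decreased by $1$ at the three $2$-vertices of each copy), the same contraction of the four copies to a $K_4$ with constant boundary $1$, the same appeal to Lemma \ref{Prop:K_4}, and the same cancellation $3\equiv -1\pmod 4$ showing that the restriction of the flow to the distinguished copy would realize the unrealizable boundary $\beta_1$. Your treatment of the structural claims (cubic, $3$-edge-connected) is also fine, and in fact more detailed than the paper's, which dismisses them as clear.

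However, your proof of $\Z_2^2$-connectivity contains a false step: $K_4$ is \emph{not} $\Z_3$-connected. A $\Z_3$-connected graph must in particular admit a nowhere-zero $\Z_3$-flow (take the zero boundary), and for a cubic graph this is equivalent to being bipartite; $K_4$ is cubic and non-bipartite, so it admits no nowhere-zero $3$-flow and a fortiori is not $\Z_3$-connected (equivalently, $K_4=W_3$ is an odd wheel, a standard example of a non-$\Z_3$-connected graph). Consequently the appeal to \cite{LLL2017} does not apply. The fact you need---that $K_4$ is $\Z_2^2$-connected---is nonetheless true, and the paper obtains it directly from Lemma \ref{con}: since $|\Z_2^2|=4$, both $C_3$ and $C_2$ are $\Z_2^2$-connected by Lemma \ref{con}(1), and contracting a triangle of $K_4$ and then a resulting digon reduces $K_4$ to a single vertex, so $K_4$ is $\Z_2^2$-connected by Lemma \ref{con}(2). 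With that one substitution your proof goes through and coincides with the paper's.
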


\begin{proof}
Clearly, $G$ is $3$-edge-connected. It follows from Lemma \ref{con}(1) that $C_2$ and $C_3$ are $\Z_2^2$-connected, thus by Lemma \ref{con}(2) $K_4$ is $\Z_2^2$-connected by contracting $3$-cycles and $2$-cycles consecutively.
By Lemma \ref{con} again, $G$ is $\Z_2^2$-connected since both $H_1$ and $K_4$ are $\Z_2^2$-connected.
We shall prove below that $G$ is not $\Z_4$-connected.
For $1\leq i\leq 4$, let $A_i$ be a copy of $H_1$, where the $2$-vertices of $A_i$ are $x_i$, $y_i$ and $z_i$ (see Figure \ref{Fig:counterexample}).
Since $H_1$ is not $\Z_4$-connected, there is a failed zero-sum boundary $\beta_1\in \ZA(H_1,\Z_4)$ such that
\begin{align}\label{ProofNoBetaFlowZ4}
  &\text{for any orientation of $H_1$,}\nonumber\\
  &\text{there is no mapping $\varphi:E(H_1)\mapsto \Z_4\setminus\{0\}$ such that $\partial\varphi=\beta_1$.}
\end{align}
Suppose, on the contrary, that $G$ is $\Z_4$-connected. Define $\beta: V(G)\mapsto \Z_4$ by
$$\beta(v)=\left\{
\begin{array}{ll}
\beta_1(v)-1  &\text{if}~v\in\{x_i,y_i,z_i|1\leq i\leq 4\};\\
\beta_1(v) &\text{otherwise}.
\end{array}
\right.
$$
Since $\sum_{v\in V(A_i)}\beta_1(v)\equiv0\pmod4$ for each $i$, we have
$$\sum_{v\in V(G)}\beta(v)=4\sum_{v\in V(A_1)}\beta_1(v)-12\equiv0 \pmod{4},$$
and so $\beta\in \ZA(G,\Z_4)$. Hence there is an orientation of $G$ and a mapping $f:E(G) \mapsto \Z_4\setminus\{0\}$ such that $\partial f=\beta$.

\begin{figure}
\begin{center}
\begin{tikzpicture}[scale=0.75]
\tikzstyle{every node}=[font=\small,scale=0.9]
\begin{scope}
[place/.style={thick,fill=black!100,circle,inner sep=1pt,minimum size=0.1mm,draw=black!100},
cloud/.style={thick,ellipse,draw=black!100,fill=white,inner sep=0pt,minimum size=28mm,minimum height=12mm}]

\node [cloud, rotate=45] (a1) at (1,5) {$A_1$};
\node [cloud, rotate=-45] (a3) at (1,1) {$A_3$};
\node [cloud, rotate=45] (a4) at (5,1) {$A_4$};
\node [cloud, rotate=-45] (a2) at (5,5) {$A_2$};
\node [place,label=left:$x_1$] (x1) at (1.7,3.7) {};
\node [place,label=right:$y_1$] (y1) at (2,4) {};
\node [place] (z1) at (2.25,4.3) {};\coordinate [label=above:$z_1$] (zz1) at (2.35,4.3);
\draw (z1) to (1.4,5.1);\draw (z1) to (1.6,5.3);
\draw (y1) to (1.2,4.9);\draw (y1) to (1,4.7);
\draw (x1) to (0.8,4.5);\draw (x1) to (0.6,4.3);
\node [place,label=left:$x_3$] (x3) at (1.7,2.3) {};
\node [place,label=right:$y_3$] (y3) at (2,2) {};
\node [place] (z3) at (2.25,1.7) {};\coordinate [label=below:$z_3$] (zz3) at (2.5,1.7);
\draw (z3) to (1.4,0.8);\draw (z3) to (1.6,0.6);
\draw (y3) to (1.2,1);\draw (y3) to (1,1.2);
\draw (x3) to (0.8,1.3);\draw (x3) to (0.6,1.5);
\node [place] (x2) at (3.7,4.3) {};\coordinate [label=above:$x_2$] (xx2) at (3.6,4.3);
\node [place,label=left:$y_2$] (y2) at (4,4) {};
\node [place,label=right:$z_2$] (z2) at (4.3,3.7) {};
\draw (z2) to (5.2,4.5);\draw (z2) to (5.4,4.3);
\draw (y2) to (4.8,4.9);\draw (y2) to (5,4.7);
\draw (x2) to (4.6,5.1);\draw (x2) to (4.4,5.3);
\node [place] (x4) at (3.65,1.7) {};\coordinate [label=below:$x_4$] (xx4) at (3.6,1.7);
\node [place,label=left:$y_4$] (y4) at (4,2) {};
\node [place,label=right:$z_4$] (z4) at (4.3,2.3) {};
\draw (z4) to (5.2,1.3);\draw (z4) to (5.4,1.5);
\draw (y4) to (4.8,1);\draw (y4) to (5,1.2);
\draw (x4) to (4.6,0.8);\draw (x4) to (4.4,0.6);
\draw (x4) to (z3);\draw (x1) to (x3);\draw (x2) to (z1);\draw (z4) to (z2);
\draw (y1) to (y4);\draw (y2) to (y3);
\end{scope}
\end{tikzpicture}
\end{center}
\caption{\small\it A $3$-edge-connected cubic graph that is $\Z_2^2$-connected, but not $\Z_4$-connected.}\label{Fig:counterexample}
\end{figure}
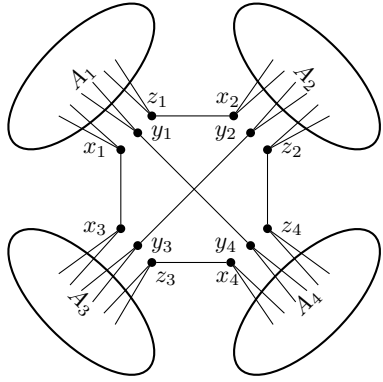

Consider the graph $F=G/\{\bigcup_{1\leq i\leq4} A_i\}$, which is a $K_4$. Suppose $w_i$ of $V(F)$ is the vertex corresponds to $A_i$. Let
$$\beta'(w_i)=\sum_{v\in V(A_i)}\beta(v)=\sum_{v\in V(A_i)}\beta_1(v)-3=1(\text{mod }4).$$
Denote $f'$ as the restriction of $f$ on $F$. Obviously, $\beta'$ is a zero-sum boundary of $F$ and $\partial f'=\beta'$.
By Lemma \ref{Prop:K_4}, there is a vertex $u$ in $F$ such that each incident edge of $u$ is either directed into $u$ with flow value $1$
or directed away from $u$ with flow value $3$.  Assume, without loss of generality, that the vertex $u$ corresponds to $A_1$ in $G$.

This implies that $\varphi=f|_{A_1}$, $f$ restricted to $A_1$, is a mapping such that $\partial\varphi=\beta_1$ by the definition of $\beta$,
which contradicts (\ref{ProofNoBetaFlowZ4}).
Hence $G$ is not $\Z_4$-connected.
\end{proof}

In the proof of Theorem \ref{z22notz4in3.3}, one may observe that the key ingredient is to apply Lemma \ref{Prop:K_4}
to show that the flow values outside a copy $A_i$ are uniquely determined, and so the flow restricted to $A_i$ satisfies
the failed zero-sum boundary, yielding a contradiction. The next construction is based on the same motivation,
for which we apply the property of $3$-prism in Lemma \ref{Prop:3prism} instead.

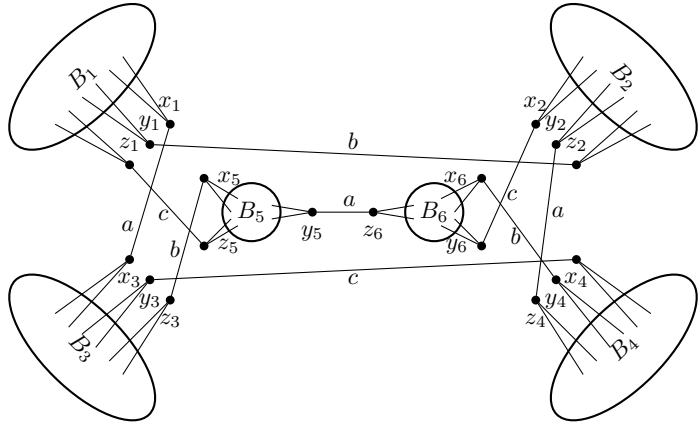
\begin{figure}[ht]
\begin{center}
\begin{tikzpicture}[scale=0.9]
\tikzstyle{every node}=[font=\small,scale=0.9]
\begin{scope}
[place/.style={thick,fill=black!100,circle,inner sep=1pt,minimum size=0.1mm,draw=black!100},
cloud/.style={thick,ellipse,draw=black!100,fill=white,inner sep=0pt,minimum size=28mm,minimum height=12mm},
cy/.style={thick,fill=black!0,circle,inner sep=3.5pt,minimum size=0.1mm,draw=black!100}]

\node [cloud, rotate=45] (a1) at (1,5) {$B_1$};
\node [cloud, rotate=-45] (a3) at (1,1) {$B_3$};
\node [cloud, rotate=45] (a4) at (9,1) {$B_4$};
\node [cloud, rotate=-45] (a2) at (9,5) {$B_2$};
\node [place,label=above:$z_1$] (z1) at (1.7,3.7) {};
\node [place,label=above:$y_1$] (y1) at (2,4) {};
\node [place,label=above:$x_1$] (x1) at (2.3,4.3) {};
\draw (x1) to (1.4,5.1);\draw (x1) to (1.6,5.3);
\draw (y1) to (1.2,4.9);\draw (y1) to (1,4.7);
\draw (z1) to (0.8,4.5);\draw (z1) to (0.6,4.3);
\node [place,label=below:$x_3$] (x3) at (1.7,2.3) {};
\node [place,label=below:$y_3$] (y3) at (2,2) {};
\node [place,label=below:$z_3$] (z3) at (2.3,1.7) {};
\draw (z3) to (1.4,0.8);\draw (z3) to (1.6,0.6);
\draw (y3) to (1.2,1);\draw (y3) to (1,1.2);
\draw (x3) to (0.8,1.3);\draw (x3) to (0.6,1.5);
\node [place,label=above:$x_2$] (x2) at (7.7,4.3) {};
\node [place,label=above:$y_2$] (y2) at (8,4) {};
\node [place,label=above:$z_2$] (z2) at (8.3,3.7) {};
\draw (z2) to (9.2,4.5);\draw (z2) to (9.4,4.3);
\draw (y2) to (8.8,4.9);\draw (y2) to (9,4.7);
\draw (x2) to (8.6,5.1);\draw (x2) to (8.4,5.3);
\node [place,label=below:$z_4$] (z4) at (7.7,1.7) {};
\node [place,label=below:$y_4$] (y4) at (8,2) {};
\node [place,label=below:$x_4$] (x4) at (8.3,2.3) {};
\draw (x4) to (9.2,1.3);\draw (x4) to (9.4,1.5);
\draw (y4) to (8.8,1);\draw (y4) to (9,1.2);
\draw (z4) to (8.6,0.8);\draw (z4) to (8.4,0.6);
\draw (x4) to (y3);\draw (x1) to (x3);\draw (y1) to (z2);\draw (z4) to (y2);
\node [cy] (a5) at (3.5,3) {$B_5$};
\node [place,label=below:$y_5$] (y5) at (4.4,3) {};
\draw (y5) to (3.8,2.9);\draw (y5) to (3.8,3.1);
\node [place,label=right:$z_5$] (z5) at (2.8,2.5) {};
\draw (z5) to (3.2,2.9);\draw (z5) to (3.3,2.8);
\node [place,label=right:$x_5$] (x5) at (2.8,3.5) {};
\draw (x5) to (3.2,3);\draw (x5) to (3.3,3.2);
\node [cy] (a6) at (6.2,3) {$B_6$};
\node [place,label=below:$z_6$] (z6) at (5.3,3) {};
\draw (z6) to (5.9,2.9);\draw (z6) to (5.9,3.1);
\node [place,label=left:$y_6$] (y6) at (6.9,2.5) {};
\draw (y6) to (6.5,2.9);\draw (y6) to (6.3,2.8);
\node [place,label=left:$x_6$] (x6) at (6.9,3.5) {};
\draw (x6) to (6.5,3);\draw (x6) to (6.3,3.2);
\draw (y5) to (z6);\coordinate [label=above:$a$] (ay5z6) at (4.95,2.95);
\draw (z5) to (z1);\coordinate [label=below:$c$] (cz1z5) at (2.2,3.17);
\draw (y6) to (x2);\coordinate [label=below:$c$] (cx2y6) at (7.35,3.47);
\draw (z3) to (x5);\coordinate [label=above:$b$] (bz3x5) at (2.37,2.2);
\draw (y4) to (x6);\coordinate [label=above:$b$] (by4x6) at (7.4,2.4);
\coordinate [label=above:$b$] (by1z2) at (5,3.8);
\coordinate [label=above:$c$] (cy3x4) at (5,1.8);
\coordinate [label=left:$a$] (ax1x3) at (1.9,2.8);
\coordinate [label=right:$a$] (ay2z4) at (7.8,3);
\end{scope}
\end{tikzpicture}
\end{center}
\caption{\small\it A $3$-edge-connected cubic graph that is $\Z_4$-connected, but not $\Z_2^2$-connected.}\label{Fig:Z2_2counterexample}
\end{figure}

Let $B_i(1\leq i\leq6$) be a copy of $H_2$, where the $2$-vertices of $B_i$ are $x_i$, $y_i$ and $z_i$.
\begin{theorem}
Assume that the $3$-prism is $3$-edge-colored with colors $a,b,c$. Let $(p_i,q_i,r_i)$ ($1\le i\le 6$) be all the permutations of $a,b,c$.
Replace each vertex of the $3$-prism with a copy $B_i$ of $H_1$, where the vertex-triple $(x_i, y_i, z_i)$ is identified with edges incident
to that vertex with color-triple $(p_i,q_i,r_i)$ for each $1\le i\le 6$. Let $G$ be the resulting graph. See Figure \ref{Fig:Z2_2counterexample}.
Then $G$ is $\Z_4$-connected, but not $\Z_2^2$-connected.
\end{theorem}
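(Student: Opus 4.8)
The plan is to mirror the proof of Theorem~\ref{z22notz4in3.3}, with the $3$-prism and its rigid edge-colouring (Lemma~\ref{Prop:3prism}) taking over the roles played there by $K_4$ and Lemma~\ref{Prop:K_4}. Throughout, each $B_i$ denotes the copy of $H_2$ fixed in the sentence preceding the statement (the occurrence of ``$H_1$'' in the displayed sentence is a typo for ``$H_2$'', in agreement with that sentence, with the caption of Figure~\ref{Fig:Z2_2counterexample}, and with the fact that by Theorem~\ref{2edgecon}(2) it is $H_2$ that is $\Z_4$-connected but not $\Z_2^2$-connected, which is the block the construction needs). Since $H_2$ has precisely three $2$-vertices and is cubic elsewhere, attaching one prism edge at each of $x_i,y_i,z_i$ turns $G$ into a cubic graph, and $3$-edge-connectivity follows exactly as in the proof of Theorem~\ref{THM: H13H23}: the prism is $3$-edge-connected and every small edge-cut of a copy of $H_2$ is forced to separate two of its $2$-vertices.

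For the positive direction I would establish $\Z_4$-connectivity by contraction. Each $B_i=H_2$ is $\Z_4$-connected, so by Lemma~\ref{con}(2) it suffices to show that $G/\bigl(\bigcup_i B_i\bigr)$, namely the $3$-prism, is $\Z_4$-connected. Contracting one triangle of the prism (a $C_3$, hence $\Z_4$-connected by Lemma~\ref{con}(1) since $4\ge 3+1$) produces $K_4$; and $K_4$ is $\Z_4$-connected because contracting a triangle of $K_4$ leaves three parallel edges, inside which a $C_2$ is $\Z_4$-connected (as $4\ge 2+1$) and whose contraction leaves a single vertex. Two applications of Lemma~\ref{con}(2) then give that the prism, and hence $G$, is $\Z_4$-connected.

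The substance is showing $G$ is \emph{not} $\Z_2^2$-connected. Fix a failed boundary $\beta^{*}\in\ZA(H_2,\Z_2^2)$, transported to each copy $B_i$. Writing $\Z_2^2=\{0,a,b,c\}$ with $a+b+c=0$, I define $\beta\in\ZA(G,\Z_2^2)$ to agree with $\beta^{*}$ on the interior of each $B_i$ and to satisfy $\beta(x_i)=\beta^{*}(x_i)+a$, $\beta(y_i)=\beta^{*}(y_i)+b$, $\beta(z_i)=\beta^{*}(z_i)+c$ at the former $2$-vertices (the same shift in every copy); since $a+b+c=0$ and $\beta^{*}$ sums to $0$ on each copy, $\beta$ indeed lies in $\ZA(G,\Z_2^2)$. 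Assume for contradiction a nowhere-zero $f$ with $\partial f=\beta$. The restriction $f'$ of $f$ to the nine prism edges has $\partial f'(w_i)=\sum_{v\in B_i}\beta(v)=0$, so $f'$ is a nowhere-zero $\Z_2^2$-flow of the cubic prism with zero boundary, i.e.\ a proper $3$-edge-colouring.

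Here Lemma~\ref{Prop:3prism} is decisive: the prism admits a unique partition of its edge set into three perfect matchings, so its only proper $3$-edge-colourings are the $3!=6$ colour-relabellings of one fixed colouring, whence $f'$ differs from the colouring used to lay out the $B_i$'s by a single colour permutation $\sigma$ of $\{a,b,c\}$. Restricting $f$ to one copy $B_i$ and adding back the (sign-free, over $\Z_2^2$) value of the incident prism edge $e_{x_i}$, the boundary of $f|_{B_i}$ at $x_i$ is $\beta(x_i)+f(e_{x_i})=\beta^{*}(x_i)+a+\sigma(p_i)$, and similarly at $y_i,z_i$; this equals $\beta^{*}$ on $B_i$ exactly when $(\sigma(p_i),\sigma(q_i),\sigma(r_i))=(a,b,c)$, i.e.\ when the layout permutation $(p_i,q_i,r_i)$ is $\sigma^{-1}(a,b,c)$. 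Because the six copies realise \emph{all} six permutations, precisely one copy meets this condition no matter what $\sigma$ is, and for that copy $f|_{B_i}$ is a nowhere-zero realization of $\beta^{*}$ in $H_2$, contradicting the choice of $\beta^{*}$. The main obstacle, and the reason all six permutations are built into the construction, is exactly this robustness step: the forced colouring is pinned down only up to $\sigma$, so any single placement could be defeated by an unlucky relabelling, whereas deploying every permutation guarantees that one copy always inherits the failed boundary $\beta^{*}$.
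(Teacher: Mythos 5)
Your proposal is correct and takes essentially the same approach as the paper's proof: $\Z_4$-connectivity via Lemma~\ref{con} by contracting the $\Z_4$-connected copies of $H_2$ into the $\Z_4$-connected prism, and non-$\Z_2^2$-connectivity by shifting a failed boundary of $H_2$ by the three nonzero elements of $\Z_2^2$ at the attachment vertices, observing that the restricted flow on the contracted prism is a proper $3$-edge-colouring, and then using the prism's unique edge-colouring together with the presence of all six permutations to find a copy that inherits the failed boundary. Your reading of ``$H_1$'' as a typo for ``$H_2$'' agrees with the paper's own setup, and your added details (the explicit contraction chain for the prism and the permutation $\sigma$ bookkeeping) merely spell out steps the paper leaves implicit.
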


\begin{proof}
Since both $H_2$ and the $3$-prism are $\Z_4$-connected, the graph $G$ is $\Z_4$-connected by Lemma \ref{con}.
We shall show below that $G$ is not $\Z_2^2$-connected.
Note that for $\Z_2^2$-group connectivity, the orientation is irrelevant since each element is self-inverse.
Thus we will omit the statements of orientations. As $H_2$ is not $\Z_2^2$-connected,
there is a failed boundary $\beta_1\in \ZA(H_2,\Z_2^2)$ such that
\begin{equation}\label{ProofNoBetaFlowZ22}
  \text{there is no mapping $\varphi:E(H_2) \mapsto  \{(0,1),(1,0\
  ),(1,1)\}$ with $\partial\varphi=\beta_1$.}
\end{equation}

Define a function $\beta: V(G)\mapsto \Z_2^2$ as follows:

$$\beta(v)=\left\{
\begin{array}{ll}
\beta_1(v)-(0,1) & \text{if}~ v\in\{x_i|1\leq i \leq 6\};\\
\beta_1(v)-(1,0) & \text{if}~ v\in\{y_i|1\leq i \leq 6\};\\
\beta_1(v)-(1,1) & \text{if}~ v\in\{z_i|1\leq i \leq 6\};\\
\beta_1(v) & \text{otherwise}.
\end{array}
\right.
$$
Since $\sum_{v\in V(B_i)}\beta_1(v)=(0,0)$ in $\Z_2^2$ for each $1\leq i\leq 6$, we have
$$\sum_{v\in V(G)}\beta(v)= \sum_{i=1}^6\left(\sum_{v\in V(B_i)}\beta_1(v)\right)-6[(0,1)-(1,0)-(1,1) ]=(0,0) ~\text{in}~ \Z_2^2,$$
and thus $\beta\in\ZA(G,\Z_2^2)$.

By contradiction, suppose that $G$ is $\Z_2^2$-connected. So there is a mapping $f: E(G)\mapsto $ $\{(0,1),$ $(1,0),(1,1)\}$ such that $\partial f=\beta$.

Consider the graph $F=G/\{\bigcup_{1\leq i\leq6} B_i\}$, which is a $3$-prism. The flow $f$ restricted to it
provides a nowhere-zero $\Z_2^2$-flow, which is indeed a proper $3$-edge-coloring and the color-classes are precisely
the edges with values $(0,1),(1,0),(1,1)$, respectively. Hence the color-triple $(a,b,c)$ is a permutation of $(0,1),(1,0),(1,1)$.
Notice that edges incident to the triples of $\{x_i,y_i,z_i|1\leq i\leq 6\}$ for different $i$ are colored with different permutation
of color-set $\{a,b,c\}$. So each of the six permutations appears on exactly one vertex.
Hence there exists a triple $(x_k,y_k,z_k)$ corresponding to $((0,1),(1,0),(1,1))$, say $k=1$ without loss of generality.
That is $f(x_1x_3)=(0,1),$ $f(y_1z_2)=(1,0)$ and $f(z_1z_5)=(1,1)$. Now by definition of $\beta$,
the mapping $f$ restricted to $B_1$, $\varphi=f|_{B_1}$, is a mapping of $H_2$ such that $\partial\varphi=\beta_1$,
a contradiction to (\ref{ProofNoBetaFlowZ22}).
Therefore, $G$ is not $\Z_2^2$-connected.
\end{proof}

\section{Concluding Remarks}
\label{sec:collapsible}

\medskip
Theorem \ref{thmJLPT} of Jaeger et al. \cite{JLPT92} says that every $4$-edge-connected graph is $S$-connected for $|S|\ge 4$.
This particularly shows that group connectivity is equivalent for distinct groups of a same size for $4$-edge-connected graphs.
In fact, the graphs constructed in Theorems \ref{mainth} and \ref{mainthmcubic} are far from being $4$-edge-connected and
contain a lot of $3$-edge-cuts. It would be curious that whether lowing down the number of $3$-edge-cuts could guarantee the equivalence
relation of group connectivity.
\begin{problem}\label{prob-number3cut}
What is the maximum number $k$ such that, for all $3$-edge-connected graphs with at most $k$ $3$-edge-cuts,
$\Z_2^2$-connectivity and $\Z_4$-connectivity are equivalent?
\end{problem}

Note that, using a smaller $\Z_4$-connected non-$\Z_2^2$-connected graph obtained in Section 2 of \cite{Husek} (Figure 2 in that paper),
the smallest such $3$-edge-connected graphs that we can construct in Theorem  \ref{mainthmcubic} have $48$ edge-cuts of size three,
which shows $ k< 48$.

On the other hand, we provide a partial positive result from some known results on collapsible graphs (which are contractible graphs for Eulerian subgraph problem).
A graph $G$ is  {\it collapsible} if for any $N\subseteq V(G)$ of even order, there is a spanning connected subgraph of $G$ whose vertices have
degree exactly odd in $N$ and even otherwise. Lai \cite{LaiH99} showed that every collapsible graph is both $\Z_4$-connected and $\Z_2^2$-connected.
Moreover, it was proved in \cite{CHL1996} that every $3$-edge-connected graph with at most nine $3$-edge-cuts is collapsible,
and therefore, both $\Z_4$-connected and $\Z_2^2$-connected. Hence, we conclude that $$9\le k\le 47.$$
It would also be interesting to find the smallest $\Z_4$-connected non-$\Z_2^2$-connected graphs (with edge-connectivity $3$), \
and the other way around. This may help to solve Problem \ref{prob-number3cut}.

In this paper, Corollary \ref{Cor:3connn} completely answers the equivalence of group connectivity for $3$-edge-connected graphs.
The dual problem on graph coloring is still open, see \cite{LLSZ11}. Is it true that for distinct groups $S$ and $T$ with a same order,
$S$-group-colorability and $T$-group-colorability are equivalent (for simple graphs)?

\section*{Acknowledgments}
Miaomiao Han is partially supported by  National Natural Science Foundation of China (No. 11901434) and the Talent Fund Project of Tianjin Normal
University, China (No. 5RL159).
Jiaao Li is partially supported by  National Natural Science Foundation of China (No. 11901318), Natural Science Foundation of Tianjin (No. 19JCQNJC14100) and  the Fundamental Research Funds for the Central Universities, Nankai University (No. 63201147).
Xueliang Li and Meiling Wang are  partially supported by NSFC No. 11871034, 11531011 and NSFQH No. 2017-ZJ-790.

\footnotesize{

}

\end{document}